\documentclass[11pt,a4paper,reqno]{amsart}
\usepackage{amsmath,amscd,amstext,amsthm,amsfonts,latexsym}
\usepackage[dvips]{graphicx}
\usepackage{enumitem}
\usepackage{bbm}
\usepackage[colorlinks=true, linkcolor=blue, urlcolor=red, citecolor=blue, hyperindex, backref]{hyperref}

\pagenumbering{arabic}


\theoremstyle{plain}
\newtheorem{theorem}{Theorem}[section]
\newtheorem{proposition}[theorem]{Proposition}
\newtheorem{lemma}[theorem]{Lemma}
\newtheorem{corollary}[theorem]{Corollary}
\newtheorem{maintheorem}{Theorem}

\newtheorem{maincorollary}{Corollary}

\theoremstyle{definition}
\newtheorem{remark}[theorem]{Remark}
\newtheorem{example}[theorem]{Example}
\newtheorem{definition}[theorem]{Definition}

\newtheorem{question}{Question}

\newcommand{\field}[1]{\mathbb{#1}}
\newcommand{\RR}{\field{R}}
\newcommand{\CC}{\field{C}}

\newcommand{\NN}{\field{N}}

    \newcommand{\Ga}{\Gamma}
       
\newcommand{\ep} {\varepsilon}

\newcommand{\la} {\lambda}

\newcommand{\si} {\sigma}

\newcommand{\cU}{{\mathcal U}}

\newcommand{\cL}{{\mathcal L}}
\newcommand{\cP}{{\mathcal P}}

\newcommand{\cD}{{\mathcal D}}

\newcommand{\cE}{{\mathcal E}}

\newcommand{\cC}{{\mathcal C}}

\newcommand{\cH}{{\mathcal H}}

\newcommand{\diam}{\operatorname{{diam}}}

\newcommand{\graph}{\operatorname{{graph}}}

\newcommand{\Leb}{\operatorname{{Leb}}}

\newcommand{\supp}{\operatorname{{supp}}}

\bibliographystyle{plain}

\setlength{\topmargin}{0cm}
\setlength{\headsep}{1.5cm}
\setlength{\textwidth}{16.cm}
\setlength{\textheight}{21.0cm}
\setlength{\headheight}{-1cm}
\setlength{\oddsidemargin}{0.0cm}
\setlength{\evensidemargin}{0.0cm}
\setlength{\footskip}{0cm}

\title{Metric mean dimension, H\"older regularity and Assouad spectrum}

\begin{document}

\author[A. Baraviera]{Alexandre Tavares Baraviera}
\address{Instituto de Matemática e Estatística - UFRGS, Av. Bento Gonçalves, 9500 - Agronomia, Porto Alegre - RS, 91509-900, Brasil}
\email{atbaraviera@gmail.com}

\author[M. Carvalho]{Maria Carvalho}
\address{CMUP \& Departamento de Matem\'atica, Faculdade de Ci\^encias da Universidade do Porto, Rua do Campo Alegre 687, Porto, Portugal.}
\email{mpcarval@fc.up.pt}

\author[G. Pessil]{Gustavo Pessil}
\address{CMUP \& Departamento de Matem\'atica, Faculdade de Ci\^encias da Universidade do Porto, Rua do Campo Alegre 687, Porto, Portugal.}
\email{gustavo.pessil@outlook.com}



\keywords{Metric mean dimension; H\"older regularity; Assouad spectrum; Ahlfors regularity}
\subjclass[2010]{Primary:
26A16, 
28A80, 
28D20, 
54F45. 
}

\date{\today}

\begin{abstract}
Metric mean dimension is a geometric invariant of dynamical systems with infinite topological entropy. We relate this concept with the fractal structure of the phase space and the H\"older regularity of the map. Afterwards we improve our general estimates in a family of interval maps by computing the metric mean dimension in a way similar to the Misiurewicz formula for the entropy, which in particular shows that our bounds are sharp. As an application, we determine the metric mean dimension of the classical Weierstrass functions. Of independent interest, we develop a dynamical analogue of the Minkowski-Bouligand dimension for subshifts on Ahlfors regular alphabets, which also provides an entropy formula in terms of the size of the set of admissible words, generalizing the classical result for subshifts on finite alphabets.
\end{abstract}

\maketitle

\tiny
\normalsize

\section{Introduction}

The metric mean dimension was introduced by  E. Lindenstrauss and B. Weiss in \cite{LW2000} to quantify the complexity of  dynamical systems with infinite topological entropy. It vanishes if the entropy is finite; otherwise, it measures the speed at which the entropy at a given scale approaches infinity as this scale
goes to zero. In this sense, the route from entropy to metric mean dimension is a dynamical analogue of the one from counting to box dimension. In both cases, the choice of the metric has impact precisely on the speed of convergence, and therefore the metric mean dimension is not purely topological, although it is invariant under bi-Lipschitz conjugacies. In this article, we investigate the interplay between the metric mean dimension, the geometry of the underlying space and the regularity of the map.

\smallskip

Lipschitz maps on compact metric spaces of finite dimension have finite topological entropy (see \eqref{entropyineq}), hence their metric mean dimension is zero. In \cite{Hazard}, P. Hazard presented a family of H\"older continuous interval maps with infinite entropy. Thus, it is worthwhile studying the metric mean dimension within this family, and we wonder if and how it is related to the H\"older exponents of the maps. We answer this question positively for a $2-$parameter family of interval maps, which comprises Hazard's examples (see Subsection~\ref{app} and Theorem~\ref{mdiminterval}). The core of the proof of this result is a formula for the metric mean dimension in terms of a sequence of horseshoes and the scales at which their dynamics can be detected, resembling Misiurewicz formula for the entropy (cf. \cite{Mis}). We deduce this formula by developing a dynamical analogue of the Minkowski-Bouligand dimension on Ahlfors regular spaces (see Subsection~\ref{ssecentropyformula} and Theorem~\ref{mdimsubshift}).

\smallskip

Taking into account that the metric mean dimension has both a dynamical and a geometrical imprint, when the phase space is not as homogeneous as the interval we need to deal with more refined geometric structures, which may not be detected if one chooses an unfit notion of dimension. It turns out that the more recent concept of Assouad spectrum (cf. \cite{FY2018}) captures in a precise way the impact of the H\"older regularity on the metric mean dimension (see Subsection~\ref{fractaldimensions} and Theorem~\ref{holderineq}). The adequacy of the Assouad spectrum in this setting is due to the fact that, at a given parameter $\alpha\in(0,1)$, it essentially quantifies how many $\ep$-balls are necessary to cover an $\ep^\alpha$-ball. Thus, it is naturally associated to dynamical covers generated by an $\alpha$-H\"older map. The essence of our approach is the application of \cite[Theorem B and Lemma 6.1]{P2024} which provide bounds for the metric mean dimension in terms of the spectral radii of matrices whose entries are ruled by the H\"older regularity of the map.

\smallskip

In the next subsections, we will describe in detail our mathematical contribution and discuss some examples of interest. After a brief glossary with the main definitions and preliminary information, we present the proofs. We end the paper raising a few open questions suggested by this work.

\subsection{H\"older regularity}

Let $(X,d)$ be a compact metric space and $T\colon X\to X$ be a continuous map. We say that $T$ is \emph{Lipschitz} if there is some $C>0$ such that $$ d(T(x),T(y))\,\leq\,C\,d(x,y)\qquad\forall\,x,y\,\in\,X. $$
It is known that the topological entropy of a Lipschitz map $T$ has a natural upper bound in terms of the Lipschitz constant and the dimension of the phase space (cf. \cite[Theorem~3.2.9]{KH}), namely\begin{equation}\label{entropyineq}
    h_{top}(T)\,\leq\,\max\,\{0,\log C\}\,\overline{\dim}_B(X,d).
\end{equation}
However, if one is interested in maps satisfying weaker forms of regularity, the phenomenon of infinite entropy may occur. Given $\alpha\in(0,1)$, we say that $T$ is \emph{$\alpha-$H\"older} if there exists a constant $C_\alpha>0$ such that$$ d(T(x),T(y))\,\leq\,C_\alpha\,d(x,y)^\alpha\qquad\forall\,x,y\,\in\,X. $$  We sometimes refer to $\alpha$ as the \emph{H\"older exponent of $T$.} Hazard (cf. \cite{Hazard}) presented a family of interval endomorphisms with infinite topological entropy which are H\"older continuous. This indicates that the metric mean dimension can be a useful tool to study such systems. We refer the reader to Subsection~\ref{app}, where we discuss this type of maps.
  
The following result is an analogue of \eqref{entropyineq} for the upper metric mean dimension $\overline{\mathrm{mdim}}_M(X,d,T)$ on spaces with finite Assouad dimension $\mathrm{dim}_A(X,d)$ in terms of the H\"older exponent $\alpha$ and the Assouad spectrum $\mathrm{dim}_A^\alpha(X,d)$ of the phase space at the parameter $\alpha$. See the precise definitions in Section~\ref{prelim}.

\begin{maintheorem}\label{holderineq}
    Let $(X,d)$ be a compact metric space with finite Assouad dimension. If $\alpha\,\in\,(0,1)$ and $T\colon X\to X$ is $\alpha-$H\"older, then 
            \begin{equation*}
        \overline{\mathrm{mdim}}_M(X,d,T) \, \leq \, (1-\alpha) \dim_A^{\alpha}(X,d).
        \end{equation*}  If, in addition, $\dim_A(X,d)\,>\,0$ then
            \begin{equation*}
                \sup\,\{\alpha\in(0,1)\,:\, T\text{ is  }\alpha-\text{H\"older}\} \, \leq\, 1\,-\,\frac{\overline{\mathrm{mdim}}_M(X,d,T)}{\dim_A(X,d)}.
            \end{equation*}  
\end{maintheorem}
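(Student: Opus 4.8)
The plan is to bound, for each scale $\varepsilon>0$, the $\varepsilon$-spanning (or $\varepsilon$-separated) count of the dynamical metric $d_n(x,y)=\max_{0\le j<n}d(T^j x,T^j y)$ by a count at a single, smaller scale in the original metric $d$, exploiting that $T$ (hence each $T^j$) is $\alpha$-H\"older. First I would record the elementary fact that if $T$ is $\alpha$-H\"older with constant $C_\alpha$, then $T^j$ is $\alpha^j$-H\"older on the compact space $X$; more usefully, two points that are $d$-close at scale $\delta$ stay $d_n$-close at a scale that degrades only like $\delta^{\alpha^{n-1}}$ up to constants. Thus a maximal $\varepsilon$-separated set for $d_n$ has cardinality at most the covering number $N_d(X,\delta)$ of $(X,d)$ at a scale $\delta=\delta(\varepsilon,n)$ chosen so that a $\delta$-ball in $d$ has $d_n$-diameter below $\varepsilon$. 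Solving the inequality $C'\,\delta^{\alpha^{n-1}}\le\varepsilon$ gives roughly $\delta\approx\varepsilon^{1/\alpha^{n-1}}$ (absorbing constants), so that $\log\delta\approx\alpha^{1-n}\log\varepsilon$ and in particular $\delta\to 0$ as $n\to\infty$ for fixed $\varepsilon$.

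Next I would feed this into the definition of the Assouad spectrum. Recall $\dim_A^\alpha(X,d)$ controls, uniformly over balls, the covering numbers $N_d(B(x,R),r)$ when $r=R^{1/\alpha}$; more precisely, for the global space one gets $N_d(X,r)\lesssim r^{-\theta}$-type control only after relating the relevant scales $R$ and $r$ through the exponent $\theta$ defining the spectrum. The key point is that the pair of scales $(\varepsilon,\delta(\varepsilon,n))$ produced above satisfies $\delta\approx\varepsilon^{1/\alpha^{n-1}}$, i.e. $\delta$ is a genuine ``$\alpha$-power'' of $\varepsilon$ in the regime $n$ large, which is exactly the regime the Assouad spectrum $\dim_A^\theta$ with $\theta$ close to $\alpha$ is designed to see. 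So I would estimate $\log N_{d_n}(X,\varepsilon)\le \log N_d(X,\delta)\le \big(\dim_A^{\theta}(X,d)+o(1)\big)\,\log(1/\delta) \approx \dim_A^{\alpha}(X,d)\,\alpha^{1-n}\log(1/\varepsilon)$, with the $\theta\to\alpha$ as $n\to\infty$. Dividing by $n$ and $|\log\varepsilon|$, the topological-entropy-at-scale-$\varepsilon$ term $\frac{1}{n}\log N_{d_n}(X,\varepsilon)$ is then bounded above by something like $\dim_A^{\alpha}(X,d)\,|\log\varepsilon|^{-1}\cdot\frac{\alpha^{1-n}}{n}\cdot|\log\varepsilon|$; the delicate bookkeeping is to take $\limsup$ in $n$ first and then $\limsup$ as $\varepsilon\to 0$ and check the surviving constant is $(1-\alpha)\dim_A^\alpha(X,d)$ rather than something larger — this comes from summing the geometric contributions $\sum_{j<n}\alpha^{j}$-type weights hidden in the step above, whose partial sums converge to $(1-\alpha)^{-1}$, and matching them against the $1/n$ normalisation. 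A cleaner route, which I would actually follow if available, is to invoke \cite[Theorem B and Lemma 6.1]{P2024}: model the dynamics at scale $\varepsilon$ by a transition matrix whose entries encode how $\delta$-balls at consecutive times are nested under the $\alpha$-H\"older map, bound its spectral radius by $\dim_A^\alpha$-type quantities, and read off the metric mean dimension bound directly; this is presumably the argument the authors intend, and it bypasses the ad hoc geometric-series accounting.

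For the second inequality I would simply rearrange the first. If $T$ is $\alpha$-H\"older then $\overline{\mathrm{mdim}}_M(X,d,T)\le (1-\alpha)\dim_A^\alpha(X,d)$, and using the monotonicity $\dim_A^\alpha(X,d)\le \dim_A(X,d)$ (the Assouad spectrum is dominated by the Assouad dimension, with equality in the limit $\alpha\to 1$) gives $\overline{\mathrm{mdim}}_M(X,d,T)\le (1-\alpha)\dim_A(X,d)$, hence $\alpha\le 1-\overline{\mathrm{mdim}}_M(X,d,T)/\dim_A(X,d)$ provided $\dim_A(X,d)>0$. Taking the supremum over all admissible $\alpha$ yields the claim. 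The main obstacle is the first paragraph's scale bookkeeping: making sure the degradation of the H\"older constants along the orbit ($C_\alpha, C_\alpha^{1+\alpha},\dots$) and the nesting of scales ($\varepsilon\mapsto\varepsilon^{1/\alpha^{n-1}}$) combine to give precisely the factor $1-\alpha$ and not a weaker constant, and that the exponent appearing in the Assouad spectrum genuinely converges to $\alpha$ (and not to some $\theta<\alpha$, which would only help, or $\theta>\alpha$, which would be fatal) as $n\to\infty$. I expect this to be exactly where \cite[Lemma 6.1]{P2024} does the heavy lifting.
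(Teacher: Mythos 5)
Your main worked-out argument (the first paragraph) does not close. Iterating the H\"older condition gives $d(T^jx,T^jy)\le C_\alpha^{1+\alpha+\cdots+\alpha^{j-1}}\,d(x,y)^{\alpha^j}$, so to force the $d_n$-diameter of a $d$-ball below $\ep$ you must take $\delta\approx\ep^{\alpha^{-(n-1)}}$, whence $\log(1/\delta)\approx\alpha^{-(n-1)}\log(1/\ep)$ \emph{grows exponentially in $n$}. After dividing by $n$, the resulting bound on $\frac1n\log N_{d_n}(X,\ep)$ is of order $\frac{\alpha^{-(n-1)}}{n}\log(1/\ep)\to+\infty$, i.e.\ the estimate is vacuous. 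The geometric series $\sum_{j<n}\alpha^j\to(1-\alpha)^{-1}$ that you invoke to rescue the constant only governs the accumulation of the H\"older \emph{constants} $C_\alpha^{1+\alpha+\cdots}$, not the exponents; there is no cancellation producing the factor $1-\alpha$. Nor does the Assouad spectrum at $\theta$ near $\alpha$ enter this way: the pair of scales $(\ep,\ep^{\alpha^{-(n-1)}})$ corresponds to $\theta=\alpha^{n-1}\to0$, not $\theta\to\alpha$, and in any case you are covering all of $X$ rather than a ball, so the spectrum is not even the relevant quantity in that step.

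The paper's proof is precisely the ``cleaner route'' you only gesture at, and the point you delegate to \cite[Lemma 6.1]{P2024} is exactly the content you needed to supply: the H\"older condition is used \emph{once per time step, always at the same pair of scales}, so no exponent is ever iterated. One covers $X$ by balls $B_\ep(x_i)$ centred at a maximal $\ep$-separated set, notes that $T(B_\ep(x_i))\subset B_{C_\alpha\ep^\alpha}(z_i)$ for some $z_i$, and bounds the number of cover elements this image can meet by a doubling-constant multiple of $\sup_{x}S\big(B_{\ep^\alpha}(x),d,\ep\big)$. This bounds every row sum of the transition matrix $\Ga^{2\ep}$ of the graph subshift, hence its spectral radius by Gershgorin via \eqref{eqcircle}, hence $h_{2\ep}(\Ga_\NN,d^\NN,\si_\NN)\le\sup_x\log S\big(B_{\ep^\alpha}(x),d,\ep\big)+O(1)$; dividing by $\log(1/\ep)$ and reading off \eqref{Assouadspectrum} at the scale pair $(R,r)=(\ep^\alpha,\ep)$ gives $(1-\alpha)\dim_A^\alpha(X,d)$ directly --- the factor $1-\alpha$ is built into the normalisation of the Assouad spectrum, not extracted from any series --- and \eqref{invlimit} transfers the bound back to $T$. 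Your derivation of the second inequality from the first via $\dim_A^\alpha(X,d)\le\dim_A(X,d)$ is correct and agrees with the paper.
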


The previous inequalities are sharp: in Subsection~\ref{app} we present a family of examples for which the equalities are attained. Theorem~\ref{holderineq} answers the questions raised in \cite{ARA}.

\smallskip

Recall that, if a compact metric space admits a Lipschitz map with infinite entropy, then by \eqref{entropyineq} its box dimension is necessarily infinite. Taking into account the connection between the box dimension and the Assouad spectrum (see \eqref{ineqdimensions}), we deduce from Theorem~\ref{holderineq} that the existence of a H\"older endomorphism with full metric mean dimension also imposes constraints on the fractal structure of the underlying space.

\begin{maincorollary}
     Let $(X,d)$ be a compact metric space with finite Assouad dimension. If there exists a continuous map $T\colon X\to X$ such that $\overline{\mathrm{mdim}}_M(X,d,T)=\overline{\dim}_B(X,d)$, then $$\dim_A^\alpha(X,d)\,=\,\frac{\overline{\dim}_B(X,d)}{1-\alpha}\quad\quad\forall\,\alpha\,\in\,\cH(T),$$ where $\cH(T)\,=\, \{\alpha\in(0,1)\,:\, T\text{ is  }\alpha-\text{H\"older}\}\,\cup\,\{0\}\,.$
\end{maincorollary}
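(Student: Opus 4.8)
The plan is to derive the corollary directly from Theorem~\ref{holderineq} together with the known inequality \eqref{ineqdimensions} relating the box dimension and the Assouad spectrum. First I would recall that for any compact metric space one has $\overline{\dim}_B(X,d)\le \dim_A^\alpha(X,d)\le \dim_A(X,d)$ for every $\alpha\in(0,1)$; in particular the hypothesis $\overline{\mathrm{mdim}}_M(X,d,T)=\overline{\dim}_B(X,d)$ forces $\dim_A(X,d)\ge \overline{\dim}_B(X,d)=\overline{\mathrm{mdim}}_M(X,d,T)$. Since the metric mean dimension of a map on a space with finite box dimension is finite and at least the box dimension would be absurdly large only if $\dim_B(X,d)=0$; so I would first dispose of the trivial case $\overline{\dim}_B(X,d)=0$, in which $\overline{\mathrm{mdim}}_M(X,d,T)=0$ as well, every $\alpha\in(0,1)$ makes $T$ vacuously $\alpha$-H\"older is false in general, so instead I note that when $\overline{\dim}_B(X,d)=0$ the claimed identity $\dim_A^\alpha(X,d)=0$ holds because $0\le\dim_A^\alpha\le$ (something controlled), and I would handle this by citing \eqref{ineqdimensions} which in the degenerate case pins $\dim_A^\alpha$ to $0$ too. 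The substantive case is $\overline{\dim}_B(X,d)>0$, hence $\dim_A(X,d)>0$, so the second inequality of Theorem~\ref{holderineq} applies.

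Next, fix $\alpha\in\cH(T)$. If $\alpha=0$ the asserted identity reads $\dim_A^0(X,d)=\overline{\dim}_B(X,d)$, which is exactly the definitional fact (or the stated normalization in \eqref{ineqdimensions}) that the Assouad spectrum at exponent $0$ equals the upper box dimension; I would dispatch this case by that convention. If $\alpha\in(0,1)$, then $T$ is $\alpha$-H\"older, so $\alpha$ lies in the set over which the supremum in the second part of Theorem~\ref{holderineq} is taken, giving
\[
\alpha \;\le\; 1-\frac{\overline{\mathrm{mdim}}_M(X,d,T)}{\dim_A(X,d)} \;=\; 1-\frac{\overline{\dim}_B(X,d)}{\dim_A(X,d)},
\]
where the last equality uses the hypothesis. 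Rearranging yields $\dim_A(X,d)\le \dfrac{\overline{\dim}_B(X,d)}{1-\alpha}$. On the other hand, the first inequality of Theorem~\ref{holderineq} applied to the $\alpha$-H\"older map $T$ gives $\overline{\dim}_B(X,d)=\overline{\mathrm{mdim}}_M(X,d,T)\le (1-\alpha)\dim_A^\alpha(X,d)$, i.e. $\dim_A^\alpha(X,d)\ge \dfrac{\overline{\dim}_B(X,d)}{1-\alpha}$. Finally, the general monotonicity/comparison $\dim_A^\alpha(X,d)\le \dim_A(X,d)\le \dfrac{\overline{\dim}_B(X,d)}{1-\alpha}$ (the first inequality always true, the second just proved) furnishes the reverse bound, and combining the two displays gives $\dim_A^\alpha(X,d)=\dfrac{\overline{\dim}_B(X,d)}{1-\alpha}$, as claimed.

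The only real point requiring care — the step I expect to be the main obstacle — is verifying the chain $\overline{\dim}_B(X,d)\le\dim_A^\alpha(X,d)\le\dim_A(X,d)$ and the limiting value $\dim_A^0(X,d)=\overline{\dim}_B(X,d)$ cleanly from the definitions recalled in Section~\ref{prelim}, so that both the $\alpha=0$ boundary case and the sandwiching argument go through without circularity. Everything else is an elementary algebraic rearrangement of the two inequalities of Theorem~\ref{holderineq} under the standing hypothesis $\overline{\mathrm{mdim}}_M(X,d,T)=\overline{\dim}_B(X,d)$; in particular no new estimate on the metric mean dimension is needed, and the finiteness of $\dim_A(X,d)$ guarantees all the quotients above are well defined and positive.
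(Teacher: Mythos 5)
Your proposal is correct and follows essentially the route the paper intends: the lower bound $\dim_A^\alpha(X,d)\ge \overline{\dim}_B(X,d)/(1-\alpha)$ comes from the first inequality of Theorem~\ref{holderineq} together with the hypothesis $\overline{\mathrm{mdim}}_M(X,d,T)=\overline{\dim}_B(X,d)$, and the matching upper bound is exactly the second half of \eqref{ineqdimensions}. Your detour through the second part of Theorem~\ref{holderineq} and the sandwich $\dim_A^\alpha\le\dim_A\le\overline{\dim}_B/(1-\alpha)$ is unnecessary (and is what forces you to treat $\overline{\dim}_B=0$ separately), since \eqref{ineqdimensions} already gives $\dim_A^\alpha(X,d)\le\overline{\dim}_B(X,d)/(1-\alpha)$ directly in all cases.
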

    
In particular, if the space is sufficiently homogeneous so that the upper box and Assouad dimensions coincide, then it does not admit a H\"older endomorphism with full upper metric mean dimension. This is the case when the space is Ahlfors regular (see Definition~\ref{dimhommms}).

\begin{maincorollary}\label{notholder}
     Let $(X,d)$ be a compact metric space such that $$0<\overline{\dim}_B(X,d)=\dim_A(X,d)<+\infty.$$ If a continuous map $T\colon X\to X$ satisfies $\overline{\mathrm{mdim}}_M(X,d,T)=\overline{\dim}_B(X,d)$, then it is not $\alpha-$H\"older for any $\alpha\,\in\,(0,1).$
\end{maincorollary}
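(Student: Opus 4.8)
The plan is to obtain this as an immediate consequence of Theorem~\ref{holderineq}, arguing by contradiction. First I would suppose that $T$ is $\alpha$-H\"older for some $\alpha\in(0,1)$ and check that both conclusions of Theorem~\ref{holderineq} are available: the hypothesis $\dim_A(X,d)<+\infty$ gives finite Assouad dimension, and the hypothesis $0<\overline{\dim}_B(X,d)=\dim_A(X,d)$ gives $\dim_A(X,d)>0$; moreover $\overline{\dim}_B(X,d)\le\dim_A(X,d)<+\infty$, so the upper metric mean dimension, being equal to $\overline{\dim}_B(X,d)$, is finite and the ratio $\overline{\mathrm{mdim}}_M(X,d,T)/\dim_A(X,d)$ is well defined and equal to $1$.

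Then I would feed the second inequality of Theorem~\ref{holderineq} with this $\alpha$. Since $\alpha$ lies in the set $\{\beta\in(0,1)\colon T\text{ is }\beta\text{-H\"older}\}$, that inequality yields
\begin{equation*}
0 \,<\, \alpha \,\leq\, \sup\{\beta\in(0,1)\colon T\text{ is }\beta\text{-H\"older}\} \,\leq\, 1 \,-\, \frac{\overline{\mathrm{mdim}}_M(X,d,T)}{\dim_A(X,d)}.
\end{equation*}
Substituting $\overline{\mathrm{mdim}}_M(X,d,T)=\overline{\dim}_B(X,d)=\dim_A(X,d)$ collapses the right-hand side to $1-1=0$, and we land on the contradiction $\alpha\le 0$. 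This shows that no $\alpha\in(0,1)$ can be a H\"older exponent of $T$.

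A variant of the same argument uses instead the first inequality of Theorem~\ref{holderineq} together with the standard monotonicity $\dim_A^{\alpha}(X,d)\le\dim_A(X,d)$: an $\alpha$-H\"older map $T$ would satisfy
\begin{equation*}
\overline{\dim}_B(X,d)=\overline{\mathrm{mdim}}_M(X,d,T)\,\leq\,(1-\alpha)\,\dim_A^{\alpha}(X,d)\,\leq\,(1-\alpha)\,\dim_A(X,d)=(1-\alpha)\,\overline{\dim}_B(X,d),
\end{equation*}
which is absurd because $\overline{\dim}_B(X,d)>0$ and $1-\alpha<1$; this is essentially the first Corollary following Theorem~\ref{holderineq}, applied under the extra hypothesis $\overline{\dim}_B(X,d)=\dim_A(X,d)$. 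There is no genuine obstacle here: the whole content is a substitution into the inequalities of Theorem~\ref{holderineq}, and the only point deserving a line of care is that the hypotheses $0<\dim_A(X,d)<+\infty$ are exactly what is needed to invoke both parts of that theorem and to make the displayed ratio meaningful.
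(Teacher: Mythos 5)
Your argument is correct and follows essentially the same route as the paper, which deduces this corollary directly from the second inequality of Theorem~\ref{holderineq} (equivalently, from the first Corollary combined with $\dim_A^\alpha(X,d)\le\dim_A(X,d)$), exactly as in both of your variants. The checks that $0<\dim_A(X,d)<+\infty$ are the right points of care, and nothing is missing.
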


Since a $C^0-$Baire generic homeomorphism on a compact manifold of dimension greater than $1$ has full metric mean dimension (cf. \cite{CRV2}), the previous corollary provides an alternative proof of the well-known fact that a $C^0-$generic homeomorphism is not $\alpha-$H\"older for any $\alpha\in(0,1)$. A similar conclusion holds for a $C^0-$generic endomorphism of the interval.

\smallskip

Here are two concrete examples on which we may apply Theorem~\ref{holderineq}. Consider the interval $[0,1]$ with the Euclidean distance, which we denote by $|\cdot|$.

\begin{example}[Weierstrass functions]
Fix positive real numbers $b>1$ and $1/b\,<\,a\,<\,1$, and consider the map $W_{a,b}\colon [-c,c]\,\to\,[-c,c]$, where $c=(1\,-\,a)^{-1}$, defined by 
$$W_{a,b}(x)\,=\,\sum_{n=0}^\infty a^{n}\,\cos( 2\pi b^n x).$$ 
It is known that $W_{a,b}$ is $\beta-$H\"older, where $\beta\,=\,\frac{\log (1/a)}{\log b}$ (cf. \cite{Har}). On the other hand, $W_{a,b}$ is $\beta-$hypersensitive, that is, $\exists \delta,C>0$ such that, for every interval $J\subset [-c,c]$ with diameter smaller than $\delta,$ we have $\diam\big(W_{a,b}(J)\big)\,\geq\, C\, \diam(J)^\beta$ (cf. \cite{FracStocV}). Combining this information with Theorem~\ref{holderineq} and \cite[Theorem 4.1]{KM2022}, we conclude that $$\textrm{mdim}_M([0,1],|\cdot|,W_{a,b})\,\,=\,\,1\,-\,\beta \,\,=\,\, 1\,+\,\frac{\log a}{\log b}.$$ We observe that the Hausdorff/box dimension of the graph of $W_{a,b}$ is $2\,+\,\log a/\log b$ (cf. \cite{graphW})
\end{example}

\begin{example}[Hypersensitive zipper maps]
Let us consider the parameterized family of maps defined in \cite[Section 1.1]{KM2022}. Denote by $\cC_0^0$ the space of continuous functions $f\colon [0,1]\to[0,1]$ fixing $0$ and $1$, with the supremum norm. Consider a pair of points $p=\big( (x_1,y_1),(x_2,y_2)\big)\in(0,1)^4$ such that $x_2>x_1$ and $y_2<y_1$. Define the operator $\Phi_p\colon \cC_0^0\to\cC_0^0$ by $$\Phi_p f(x)\,\,=\,\, \left\{\begin{array}{lc}
    y_1\,\, f(\frac{x}{x_1}) & \text{if }\,\,x\in[0,x_1] \\    
    y_1 \,-\,(y_1-y_2)f\big(\frac{x-x_1}{x_2-x_1}\big)  & \,\,\text{if }\,\,x\in [x_1,x_2]\\
     y_2 \,+\,(1-y_2)f\big(\frac{x-x_2}{1-x_2}\big) & \text{if }\,\,x\in[x_2,1]. 
\end{array}
\right.$$
Then $\Phi_p$ is a contraction and thus has a unique fixed point $Z_p\colon [0,1]\to[0,1]$, which is called the zipper map of parameter $p$. The point $p$ brings forward $4$ additional parameters, denoted by $\la_{\min}(p)\leq\la_{\max}(p)$ and $0<h_{min}(p)\leq h_{max}(p)<1$, which describe geometric properties of the operator and impart dynamical information about $Z_p$. For instance, in \cite[Proposition~H]{KM2022} it was shown that $Z_p$ is $\beta-$hypersensitive if and only if $\la_{min}>1$, where $\beta= 1 - \frac{\log \la_{min}}{|\log h_{min}|}$. Consequently (see \cite[Theorem~D]{KM2022}), 
 $$\frac{\log \la_{min}}{|\log h_{min}|}\,\,\leq\,\,\underline{\textrm{mdim}}_M([0,1],|\cdot|,Z_p).$$ Besides, $Z_p$ is $\alpha-$H\"older for $\alpha=1\,-\,\frac{\log \la_{max}}{|\log h_{max}|}$ (cf. \cite[Proposition 2.2]{KM2022}). Combining this information with Theorem~\ref{holderineq}, we obtain $$\frac{\log \la_{min}}{|\log h_{min}|}\,\,\leq\,\,\underline{\textrm{mdim}}_M([0,1],|\cdot|,Z_p)\,\,\leq\,\,\overline{\textrm{mdim}}_M([0,1],|\cdot|,Z_p)\,\,\leq\,\,\frac{\log \la_{max}}{|\log h_{max}|}.$$
\end{example}

\medskip

\subsection{Horseshoes in the interval}\label{app}


In this subsection we consider the interval $[0,1]$ with the Euclidean distance and the Lebesgue measure, which we denote by $\Leb$. We will compute the metric mean dimension of a $2-$parameter family of interval maps firstly introduced by Kolyada and Snoha (cf. \cite{KS}). This formula refines the information provided by Theorem~\ref{holderineq} for this family, revealing that any loss in regularity is translated into metric mean dimension and vice-versa.

 Fix a sequence $a\,=\,(a_k)_{k\,\geq\,0}$ of real numbers and a sequence $b\,=\,(b_k)_{k\,\geq\,1}$ of positive odd integers satisfying the following conditions:

 \smallskip

\begin{itemize}
    \item[(C1)] $0=a_0<a_1<...<a_k\to1 $.
    \smallskip
    \item[(C2)]  $(a_{k}-a_{k-1})_k$ decreases to zero.
    \smallskip
    \item[(C3)] $(b_k)_k$ is strictly increasing.
\end{itemize}

\smallskip

\noindent For each $k \in \mathbb N$, denote by $f_k\colon[0,1]\to[0,1]$ the unique continuous piecewise affine map with $b_k$ equidistributed full branches such that $f_k$ is increasing in the first one - and since $b_k$ is odd, also in the last. Let $J_k=[a_{k-1},a_k]$ and $T_k\colon J_k \to [0,1]$ be the unique increasing affine map from $J_k$ onto $[0,1]$.  Define

\begin{equation}\label{Tab}
    \begin{array}{rccc}
T_{a,b} \colon & [0,1] & \rightarrow & [0,1] \\
& x \in J_k & \mapsto & T_k^{-1}\circ f_k \circ T_k\\
& x=1 & \mapsto & 1
\end{array}
\end{equation}

\begin{figure}[!htb]
\begin{center}
\includegraphics[scale=0.4]{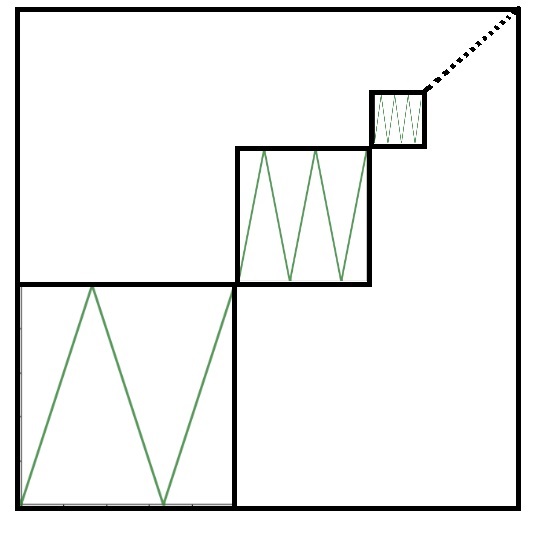}
\caption{Graph of the map $T_{a,b}$.}
\label{figure1}
\end{center}
\end{figure}

Each map is determined by a sequence of horseshoes accumulating at the extreme point $1$ (see Figure~\ref{figure1}). The sequence $(a_k)_k$ defines the intervals $(J_k)_k$ where the horseshoes lie, and each $b_k$ represents the number of branches in the $k-$th horseshoe. In particular, the topological entropy of $T_{a,b}$ restricted to $J_k$ is $\log b_k$. To fix notation we write $\ep_k = |J_k|/b_k$, which represents the length of the injectivity domains in $J_k$.

Clearly, $T_{a,b}$ has infinite topological entropy for every $a$ and $b$ (see \cite{KS}). The metric mean dimension has been computed for specific choices of the parameters (see for instance \cite{VV} and \cite{CPV}). Similar maps were studied by Hazard in \cite{Hazard}, though without any mention to metric mean dimension. Actually, the main focus there was to construct interval maps which are $\alpha-$H\"older, for $\alpha\in(0,1),$ and whose topological entropy is infinite.

A classical result of Misiurewicz \cite{Mis} states that if $T\colon[0,1]\to[0,1]$ is a continuous map then for every $n\in\NN$ there exists \begin{itemize}
    \item an interval $J_n$
    \item a collection $\cD_n$ of pairwise disjoint subintervals of $J_n$
    \item $k_n\in\NN$
\end{itemize} such that $J_n\subset T^{k_n}(I)$ for every $I\in\cD_n$ and \begin{equation}\label{miseq}
    h_{top}(T)\,=\,\limsup_{n\,\to\,+\infty}\frac{\log\#\cD_n}{k_n}.
\end{equation} 
Observe that Misiurewicz formula \eqref{miseq} for the maps $T_{a,b}$ defined in \eqref{Tab} becomes $$ h_{top}(T_{a,b})\,=\,\limsup_{k\,\to\,+\infty}\,\log b_k\,=\,+\infty.$$
We will obtain a formula similar to \eqref{miseq} for the metric mean dimension, with respect to the Euclidean distance, within the family $T_{a,b}.$ As a consequence, we also establish a connection between the upper metric mean dimension and the H\"older regularity of $T_{a,b}$, which shows that the estimates in Theorem~\ref{holderineq} are sharp.

\begin{maintheorem}\label{mdiminterval}
   Let $a\,=\,(a_k)_{k\,\geq\,0}$ and $b\,=\,(b_k)_{k\,\geq\,1}$ be two sequences satisfying the conditions \emph{(C1)-(C3)} and $T_{a,b}$ be the corresponding interval map. Then  
   \begin{eqnarray}\label{mismdim}       \nonumber\overline{\mathrm{mdim}}_M([0,1],|\cdot|,T_{a,b})&=&\limsup_{k\,\to\,+\infty}\,\frac{\log b_k}{\log(1/\ep_k)} \\ \nonumber\underline{\mathrm{mdim}}_M([0,1],|\cdot|,T_{a,b})&=&\liminf_{k\,\to\,+\infty}\,\frac{\log b_k}{\log(1/\ep_k)}.
   \end{eqnarray}
Moreover, 
    \begin{equation}\label{11111}
        \sup\,\cH(T)\,=\,1\,-\,\overline{\mathrm{mdim}}_M([0,1],|\cdot|,T_{a,b}),
    \end{equation}

\medskip
    
    \noindent where $\cH(T)\,=\,\{\alpha\in(0,1)\,:\, T\text{ is  }\alpha-\text{H\"older}\}\,\cup\,\{0\}.$
\end{maintheorem}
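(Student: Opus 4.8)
The plan is to use the fact that $T_{a,b}$ splits as a countable family of invariant horseshoes shrinking towards the fixed point $1$, to compute the $\vep$-scale topological entropy of the whole system by comparison with the piecewise-affine models $f_k$ (the dynamical Minkowski--Bouligand estimates of Section~\ref{MB} apply here, since $([0,1],|\cdot|,\Leb)$ is Ahlfors $1$-regular and each $f_k$ has $b_k$ equidistributed full branches), and then to evaluate the resulting limits.

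\emph{Structural step.} I would first record that each $J_k$ and each tail $[a_K,1]$ is $T_{a,b}$-invariant, that the endpoints $a_k$ and the point $1$ are fixed, that $\diam\,[a_K,1]\to0$ as $K\to\infty$, and that the affine homeomorphism $T_k\colon J_k\to[0,1]$ conjugates $T_{a,b}|_{J_k}$ to $f_k$ while multiplying distances by $1/|J_k|$; hence, writing $s_n(\cdot)$ for the maximal cardinality of an $(n,\cdot)$-separated set of the Bowen metric $d_n$, one has $s_n^{J_k}(\vep)=s_n^{f_k}(\vep/|J_k|)$. Splitting an $(n,\vep)$-separated set of $[0,1]$ along the $J_k$'s, only the finitely many $J_k$ with $|J_k|\ge\vep$ contribute a quantity growing with $n$: the points lying in the tail $[a_{K(\vep)},1]$, where $K(\vep)$ is least with $\diam\,[a_{K(\vep)},1]<\vep$, form a set of cardinality bounded independently of $n$.

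\emph{Model computation and assembly.} Using the partition of $[0,1]$ into $\lceil1/\delta\rceil$ equal subintervals one bounds the $\delta$-scale entropy $h(f_k,\delta):=\limsup_n\tfrac1n\log s_n^{f_k}(\delta)$ above by $\log\lceil1/\delta\rceil$ (and trivially by $\log b_k$), while well-separated families of full branches bound it below, giving, with a uniform error, $h(f_k,\delta)=\log\min\{b_k,1/\delta\}+O(1)$ for $\delta<1$ (and $0$ for $\delta\ge1$). Combined with the splitting above and the invariance of the $J_k$, this yields
\[
\overline{\mathrm{mdim}}_M([0,1],|\cdot|,T_{a,b})=\limsup_{\vep\to0}\frac{1}{\log(1/\vep)}\,\sup_{k}\log\min\!\Big\{b_k,\ \tfrac{|J_k|}{\vep}\Big\},
\]
and the analogous identity with $\liminf$ for the lower metric mean dimension. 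Since $\ep_k=|J_k|/b_k$ strictly decreases to $0$, for $\vep$ in the window $(\ep_{k+1},\ep_k]$ the inner supremum equals $\max\{\log b_k,\ \log(|J_{k+1}|/\vep)\}$; at the distinguished scale $\vep=\ep_k$ it equals $\log b_k$, producing the ratio $\log b_k/\log(1/\ep_k)$. One must then argue that these scales are extremal for the $\limsup$ (and that the relevant scales are extremal for the $\liminf$), so that the limit over $\vep$ collapses to $\limsup_k\log b_k/\log(1/\ep_k)$ (resp.\ $\liminf_k$). This bookkeeping --- tracking how the $\vep$-scale entropy interpolates across the windows $(\ep_{k+1},\ep_k]$ between its plateau values $\log b_k$ --- is the main obstacle, and it is the Misiurewicz-type heart of the statement: the claim is that no intermediate scale detects more complexity than the horseshoes themselves do at the scale $\ep_k$ of their injectivity domains.

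\emph{H\"older exponents.} For \eqref{11111} I would show directly that $T_{a,b}$ is $\alpha$-H\"older precisely when $\sup_k b_k\,\ep_k^{\,1-\alpha}<\infty$. On each injectivity domain of $J_k$ the map is affine with slope $b_k$ onto $J_k$, and the domain has length $\ep_k$, which forces the H\"older constant $b_k\ep_k^{1-\alpha}$; conversely one checks that this single quantity controls the remaining cases --- two points straddling a branch boundary (the adjacent affine pieces agree at a fixed endpoint, so the H\"older quotient is again governed by the slope $b_k$) and two points in distinct $J_k$'s, where condition (C2) together with a splitting of the H\"older quotient reduces matters to the within-branch bound. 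As $\log(b_k\ep_k^{1-\alpha})=\log b_k-(1-\alpha)\log(1/\ep_k)$ and $\log(1/\ep_k)\to+\infty$, this supremum is finite exactly when $1-\alpha\ge\limsup_k\log b_k/\log(1/\ep_k)=\overline{\mathrm{mdim}}_M([0,1],|\cdot|,T_{a,b})$, whence $\sup\cH(T)=1-\overline{\mathrm{mdim}}_M([0,1],|\cdot|,T_{a,b})$.
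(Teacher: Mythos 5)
Your route to the dimension formula is genuinely different from the paper's: you compute the $\ep$-scale entropy directly, conjugating each $J_k$-horseshoe to the model $f_k$ and counting, whereas the paper applies its Minkowski--Bouligand formula (Corollary~\ref{lowdimformula}) and estimates $\Leb^n\big(Y(n,\ep)\big)$ by splitting $[0,1]$ into $[0,a_k]$ and a tail. Your structural reductions (invariance of the $J_k$'s and of the tails, at most one separated point per invariant piece of diameter $<\ep$, the model estimate $h(f_k,\delta)=\log\min\{b_k,1/\delta\}+O(1)$, and the window identity $\sup_j\log\min\{b_j,|J_j|/\ep\}=\max\{\log b_k,\log(|J_{k+1}|/\ep)\}$ for $\ep\in(\ep_{k+1},\ep_k]$) are all correct. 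The H\"older part is essentially the paper's argument: your criterion $\sup_k b_k\,\ep_k^{1-\alpha}<\infty$ is exactly boundedness of the paper's $H_k(\alpha)=|J_k|/\ep_k^\alpha$, and the reduction of the cross-interval cases to this quantity is done in the paper via concavity of $t\mapsto t^\alpha$; note only that at $\alpha=1-\overline{\mathrm{mdim}}_M$ the supremum need not be finite, which does not affect $\sup\cH(T)$.

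The step you flag as ``the main obstacle'' is, however, a genuine gap, and for the \emph{lower} metric mean dimension it cannot be closed as proposed. On the window $(\ep_{k+1},\ep_k]$ your quantity $\max\{\log b_k,\log(|J_{k+1}|/\ep)\}/\log(1/\ep)$ is the maximum of a term decreasing in $1/\ep$ and a term increasing in $1/\ep$; its supremum over the window is $\max\{\log b_k/\log(1/\ep_k),\ \log b_{k+1}/\log(1/\ep_{k+1})\}$, so the $\limsup$ collapse is correct and easily completed. But its minimum is attained at the crossing scale $\ep=|J_{k+1}|/b_k$, where the value is $\log b_k\big/\big(\log b_k+\log(1/|J_{k+1}|)\big)$ --- involving $|J_{k+1}|$ rather than $|J_k|$ --- and this can have strictly smaller $\liminf$ than $\log b_k/\log(1/\ep_k)$. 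Concretely, for $b_k\approx 2^{2^k}$ and $|J_k|\approx c\,2^{-2^k}$, which satisfy (C1)--(C3), one has $\log b_k/\log(1/\ep_k)\to 1/2$ while the crossing values tend to $1/3$; since your window formula for $h_\ep(T)$ is correct up to uniform additive constants, the scales $\ep_k$ are \emph{not} extremal for the $\liminf$, and the claimed collapse fails. You cannot simply defer to the paper here: its two-sided bound $\log|J_{k+1}|\le\limsup_n\frac1n\log\Leb^n\big(Y(n,\ep)\big)\le\log|J_k|+\log 4$ is likewise not tight across a window, and its passage from $\log|J_{k+1}|/\log(1/\ep)$ to $\log|J_{k+1}|/\log(1/\ep_{k+1})$ divides a negative number by a larger denominator, reversing the needed inequality. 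So you should either restrict your conclusion to the $\limsup$ identity, or prove instead $\underline{\mathrm{mdim}}_M=\liminf_{\ep\to0^+}\sup_k\log\min\{b_k,|J_k|/\ep\}/\log(1/\ep)$ and identify conditions (e.g.\ $\log|J_{k+1}|\sim\log|J_k|$, or existence of the limit of $\log b_k/\log(1/\ep_k)$) under which this equals $\liminf_k\log b_k/\log(1/\ep_k)$.
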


\medskip


For example, by Theorem~\ref{mdiminterval}:
\begin{itemize}
    \item[(i)] If $a_k\,=\,\sum_{n=1}^k 6/\pi n^2$ and $b_k\,=\,3^k$, then $\mathrm{mdim}_M([0,1],|\cdot|,T_{a,b})\,=\,1$ and $T_{a,b}$ is not $\alpha-$H\"older for any $\alpha\in(0,1)$.
     \item[(ii)] Given $0<\beta<1$, if $a_k\,=\, \sum_{n=1}^k C(\beta)\,3^{n\,(1-\frac{1}{\beta})}$, where $C(\beta)\,=\,\big(\sum_{n=1}^\infty 3^{n\,(1-\frac{1}{\beta})}\big)^{-1}$, and $b_k\,=\,3^k$, then $\mathrm{mdim}_M([0,1],|\cdot|,T_{a,b})\,=\,\beta$ and $T_{a,b}$ is $\alpha-$H\"older for every $\alpha\in(0,1-\beta)$.
    \item[(iii)]  If $a_k\,=\, 1/2^{k}$ and $b_k\,=\,2k+1$, then $\mathrm{mdim}_M([0,1],|\cdot|,T_{a,b})\,=\,0$ and $T_{a,b}$ is $\alpha-$H\"older for every $\alpha\in(0,1).$
\end{itemize}

\smallskip

We stress that the lack of regularity represented by the value of the metric mean dimension in Theorem~\ref{mdiminterval} is of a different nature from the one studied by Hazard in \cite{Hazard}. The maps in \cite{Hazard} are defined like we did for $T_{a,b}$, considering $a=(2^{-k})_k$ and $b=(2k+1)_k$, but replacing each $f_k$ by $g_k = \phi_\alpha\circ f_k \circ \phi_\alpha^{-1}$, where $\phi_\alpha(x)\,=\,x^\alpha$. Thus, these maps have zero metric mean dimension, and their irregularity is due to the appearance of cusps. On the other hand, we consider maps composed by sequences of piecewise \emph{linear} horseshoes whose lack of regularity comes from the distortion ratio between the lengths of the injectivity domains of the horseshoes and of their images; or equivalently, the ratio between the number of branches and the scales at which they separate. This is precisely what metric mean dimension quantifies.

\medskip

\subsection{An entropy formula}\label{ssecentropyformula}

It is known \cite[Theorem 7.13]{Wa} that the topological entropy of a subshift on finitely many symbols $Y\subset\{1,\cdots,q\}^\NN$ can be expressed in terms of the exponential growth of the number of $n-$admissible words in $Y$, namely, \begin{equation}\label{3entshift}
    h_{top}(Y,\si_\NN)\,\,=\,\,\lim_{n\,\to\,+\infty}\frac{1}{n}\log \Theta_n(Y), 
\end{equation} 
where, for each $n\,\in\,\NN$, $\Theta_n(Y)$ is defined as the number of $n-$tuples $[i_1,...,i_n]$ such that the set $\{(y_k)_{k\,\in\,\NN}\,\in\,Y\,:\, y_1\,=\,i_1,\cdots,\,y_n\,=\,i_n\}$ is nonempty. We will extend this formula for subshifts on more general alphabets.

A compact metric space $(X,d)$ is \emph{Ahlfors regular} if there exists a Borel probability measure $\mu\,\in\,\cP(X)$ satisfying \begin{equation*}
             \exists \,\ep_0,C_1,C_2>0: \qquad  C_1\, \ep^{\dim_B X}\,\leq\,\mu(B_\ep(x))\,\leq \,C_2\, \ep^{\dim_B X}\qquad \forall x\in X,\,\forall0<\ep<\ep_0.
\end{equation*}
For instance, a finite set with the normalized counting measure or the Euclidean cube with the Lebesgue measure are Ahlfors regular. In this setting, we obtain sharp estimates for the $\ep-$entropy of an arbitrary subshift $Y\subset X^\NN$ on an Ahlfors regular alphabet $X$, namely
\begin{equation*}
    h_\ep(Y,d_\NN,\si_\NN)\,\,\,=\,\,\, \lim_{n\,\to\,+\infty}\frac{1}{n}\log \Big(\frac{\mu^n\big(Y(n,\ep)\big)}{\ep^{n\,\dim_B X}}\Big)\,\,\,+\,\,\,o(1),
\end{equation*}
where $Y(n,\ep)$ is the set $B_\ep(\pi_n(Y))\subset X^n$ with respect to the supremum norm $||\cdot||_\infty$ and $\pi_n\colon X^\NN\,\to\,X^n$ is the projection on the first $n$ coordinates.  

\medskip

\begin{maintheorem}\label{mdimsubshift}
    Let $(X,d,\mu)$ be an Ahlfors regular space. For every subshift $Y\,\subset\,X^\NN$, there exists $\la=\la(Y)>0$, which is unique if $h_{top}(Y,\si_\NN)<+\infty$, such that
    \begin{equation}\label{entformula}
         h_{top}(Y,\si_\NN)\,\,\,=\,\,\, \limsup_{\ep\,\to\,0^+}\lim_{n\,\to\,+\infty}\frac{1}{n}\log \Big(\frac{\mu^n\big(Y(n,\ep)\big)}{\la^n\,\ep^{n\,\dim X}}\Big),
    \end{equation}
In addition, independently of $\la$,
    
    \begin{equation}\label{eq1mdimsubshift}
            \begin{split}
            \overline{\mathrm{mdim}}_M(Y,d^\NN ,\si_\NN)\, \,\,\, & =\,\,\,\,  \mathrm{dim}(X,d)\,+\, \limsup_{\ep\,\to\,0^+} \frac{1}{\log(1/\ep)} \lim_{n\,\to\,+\infty}\frac{1}{n}\log \mu^n \big( Y(n,\ep) \big)  \end{split} 
    \end{equation} \begin{equation*}\label{eq2mdimsubshift}
            \begin{split}                \underline{\mathrm{mdim}}_M(Y,d^\NN ,\si_\NN)\, \,\,\, & =\,\,\,\,  \mathrm{dim}(X,d)\,+\, \liminf_{\ep\,\to\,0^+} \frac{1}{\log(1/\ep)} \lim_{n\,\to\,+\infty}\frac{1}{n}\log \mu^n \big( Y(n,\ep) \big).
            \end{split} 
    \end{equation*}

\end{maintheorem}

\medskip

In particular, if $X$ is finite we recover \eqref{3entshift}. Indeed, if $X\,=\,\{1,\cdots,q\}$ then it has zero dimension and is an Ahlfors regular space with respect to the normalized counting measure $\mu$, whose constants are $C_1\,=\,C_2\,=\,1/q$. The proof of Theorem~\ref{mdimsubshift} yields $\la(Y)=1/q$ for every $Y$ and $$ h_{top}(Y,\si_\NN)\,\,=\,\, 
\limsup_{\ep\,\to\,0^+} \lim_{n\to\,+\infty}\frac{1}{n}\log \Big(q^n\cdot\mu^n\big(Y(n,\ep)\big) \Big) \,\,=\,\,\lim_{n\,\to\,+\infty}\frac{1}{n}\log \Theta_n(Y).$$

It is known (cf. \cite[Theorem B]{P2024}) that one can compute the metric mean dimension of a continuous map $T$ acting on a compact metric space $X$ by considering the graph of the map as a transition set and the corresponding subshift of compact type
$$Y\,\,=\,\,\big\{ (x,T(x),T^2(x),\cdots )\,:\, x\,\in\,X \big\}.$$
As an application of Theorem~\ref{mdimsubshift}, we obtain an alternative formula for the metric mean dimension of $T$ whenever $X$ is an Ahlfors regular space. This has the advantage of providing an interpretation of the difference between the metric mean dimension of the map and the dimension of the underlying space.

\begin{corollary}\label{lowdimformula}
     Let $(X,d,\mu)$ be an Ahlfors regular space and $T\colon X\to X$ be a continuous map. Then, there exists $\la=\la(T)>0$, which is unique if $h_{top}(X,T)<+\infty$, such that
    \begin{equation*}
         h_{top}(X,T)\,\,\,=\,\,\, \limsup_{\ep\,\to\,0^+}\lim_{n\,\to\,+\infty}\frac{1}{n}\log \Big(\frac{\mu^n\big(Y(T,n,\ep)\big)}{\la^n\,\ep^{n\,\dim X}}\Big).
    \end{equation*}In addition, independently of $\la$,
    \begin{equation*}\label{defup}
            \begin{split}
               \overline{\mathrm{mdim}}_M(X,d,T)\,  & =\,  \mathrm{dim}(X,d)\,+\, \limsup_{\ep\,\to\,0^+} \frac{1}{\log(1/\ep)} \lim_{n\,\to\,+\infty}\frac{1}{n}\log \mu^n \big( Y(T,n,\ep) \big)
            \end{split} 
    \end{equation*} \begin{equation*}\label{deflow}
            \begin{split}
              \underline{\mathrm{mdim}}_M(X,d,T)\,  & =\,  \mathrm{dim}(X,d)\,+\, \liminf_{\ep\,\to\,0^+} \frac{1}{\log(1/\ep)} \lim_{n\,\to\, +\infty}\frac{1}{n}\log \mu^n \big( Y(T,n,\ep) \big)
            \end{split} 
    \end{equation*}
    where $$Y(T,n,\ep)\,=\,\bigcup_{x\,\in\, X}  B_\ep(x)\times B_\ep(T(x))\times\cdots\times B_\ep( T^{n-1}(x)).$$
\end{corollary}

The previous corollary turns out to be a useful tool to explicitly compute the metric mean dimension on Ahlfors regular spaces. See, for instance, Proposition~\ref{mdiminterval1}.

It is well known that the  metric mean dimension of iterates of a continuous map behaves differently from the topological entropy. In fact, we have $$\overline{\mathrm{mdim}}_M(X,d ,T^k)\,\,\,\leq\,\,\, k\,\,\,\overline{\mathrm{mdim}}_M(X,d ,T),\quad \forall k\geq1,$$ whereas this inequality can be strict, as happens when the equality in \eqref{mdimleqbox} is attained. As a consequence of Corollary~\ref{lowdimformula}, we obtain an optimal converse in this setting.

\begin{corollary}\label{iterados}
     Let $(X,d,\mu)$ be an Ahlfors regular space and $T\colon X\to X$ be a continuous map. Then, for every $k\geq1$,
     \begin{eqnarray*}    \overline{\mathrm{mdim}}_M(X,d,T^k)\,&\geq&\,\,\,k\,\,\,\overline{\mathrm{mdim}}_M(X,d,T)\,\, -\,\, (k-1)\,\dim X \\ \underline{\mathrm{mdim}}_M(X,d,T^k)\,&\geq&\,\,\,k\,\,\,\underline{\mathrm{mdim}}_M(X,d,T)\,\, - \,\,(k-1)\,\dim X.
     \end{eqnarray*}

     \noindent In particular, if $T$ has full metric mean dimension, so do all of its iterates.
\end{corollary}

\medskip

\section{Preliminaries}\label{prelim}

In this section we introduce some basic terminology.

\subsection{Metric mean dimension}

Let $(X,d)$ be a compact metric space and $T\colon X \to X$ be a continuous map. For each $n \in \mathbb{N}$, define the Bowen metric
$$d_n(x,y) \,=\, \max_{0\,\leq\, j\,\leq\, n-1}\,d(T^j(x),\,T^j(y)) \qquad \forall\, x, y \in X$$
which is equivalent to $d$. We sometimes refer to balls with respect to this metric as dynamical balls.

\smallskip

Given $\ep > 0$ and $K\subset X$, consider the following minimum
\begin{equation}\label{def:S}
S(K,d,\ep) \,=\, \min \left\{ \ell\colon \,\{U_i\}_{1\,\leq \,i\,\leq \,\ell} \text{ is a finite open cover of $K$ with $\mathrm{diam}(U_i, d) \leq \ep$}\right\}
\end{equation}
and the limit
$$h_\ep(K,d,T) \, = \, \lim_{n\,\to\,+\infty} \frac{1}{n}\log S(K,d_n,\ep),$$
which exists since the sequence $\big(\log S(K,d_n,\ep)\big)_n$ is sub-additive in the variable $n$. Recall that the \emph{topological entropy} is given by $h_{top}(X,T)\,=\,\lim_{\ep\,\to\,0^+}h_\ep(X,d,T).$ 

\begin{definition}\label{mdimT}
\emph{The upper/lower metric mean dimension of $(X,d,T)$ are given, respectively, by
$$\overline{\mathrm{mdim}}_M(X,d,T) \,=\, \limsup_{\ep\,\to\, 0^+}\, \frac{h_\ep(X,d,T)}{\log (1/\ep)}$$ and $$\underline{\mathrm{mdim}}_M(X,d,T) \,=\, \liminf_{\ep\,\to\, 0^+}\, \frac{h_\ep(X,d,T)}{\log (1/\ep)}.$$}
\end{definition}

\smallskip

A subset $E\subset X$ is said to be $\ep-$separated with respect to the metric $d$ if $d(x,y) \geq \ep$ for every $x, y \in E$; it is $\ep-$spanning with respect to the metric $d$ if for every $x\in K$ there exists some $y\in E$ such that $d(x,y)\leq\ep.$ The notion of upper/lower metric mean dimension can be equivalently defined if one replaces $S(X,d,\ep)$ by either
$$\,\,\,S_1(X,d,\ep)\,=\,\sup\Big\{\#E\colon \,E\subset X \text{ is $\ep-$separated}\Big\}$$
or
$$S_2(X,d,\ep) \,=\,\inf\Big\{\#E\colon \,E\subset X \textrm{ is $\ep-$spanning}\Big\}$$ and instead of the limit in $n$, we take either $\limsup_n$ or $\liminf_n.$ 

\medskip
 The \emph{upper/lower box
dimension of $(X,d)$}  are, respectively, defined as
$$\overline{\dim}_B (X,d) \,=\, \limsup_{\varepsilon \, \to \, 0^+}\, \frac{\log S(X,d,\ep)}{\log(1/\varepsilon)}\quad\text{and}\quad\underline{\dim}_B (X,d) \,=\, \liminf_{\varepsilon \, \to \, 0^+}\, \frac{\log S(X,d,\ep)}{\log(1/\varepsilon)}.$$ If the above limits exist, we denote it by $\dim_B(X,d)$ (or simply by $\dim_BX$ if the metric is clear) and refer to it as the \emph{box dimension of $X$}. We recall that (cf. \cite{VV})
\begin{equation}\label{mdimleqbox}
    \begin{split}
        \overline{\mathrm{mdim}}_M(X,d,T) \,&\leq\,\overline{\mathrm{dim}}_B(X,d) \\ \underline{\mathrm{mdim}}_M(X,d,T) \,&\leq\,\underline{\mathrm{dim}}_B(X,d)
    \end{split}.
\end{equation}

\subsection{Other fractal dimensions}\label{fractaldimensions}

In what follows, we denote by $B_\ep(x)$ the open ball on the metric space $(X,d)$ centered at $x$ with radius $\ep.$

\begin{definition}
    \emph{
The Assouad dimension of $(X,d)$ is defined as $$ \dim_A(X,d)\,=\, \inf\,\Bigg\{ s\,:\, \begin{matrix}          \exists\,C,\ep_0>0\,\,\,:\,\,\,\forall\,0<r<R<\ep_0\,\,\, \forall\,x\in X \\ S\big( B_R(x),d,r \big)\,\leq\,C\,\Big(\frac{R}{r}\Big)^s
\end{matrix}    \Bigg\}. $$    
    }
\end{definition}

The Assouad dimension quantifies the maximal local complexity of a fractal, since it provides the maximal exponential growth rate of $\sup_{x\,\in\,X}\,S\big( B_R(x),d,r \big)$ as the parameters $0<r<R$ vary independently. In particular, it is always an upper bound to the box dimension, which is a more averaged measurement of complexity over the whole space. Fraser and Yu proposed in \cite{FY2018} an interpolation between these notions of complexity, obtained by fixing the relation between the scales $r$ and $R$ in the definition of Assouad dimension. More precisely, for each fixed $\theta\in(0,1)$ the scales are chosen to satisfy $R=r^\theta$, yielding a $1-$parameter family of dimensions.

\begin{definition}\label{spectrum}
    \emph{
The Assouad spectrum of $(X,d)$ is the map $$\theta\in(0,1)\,\mapsto\, \dim_A^\theta(X,d)\,=\, \inf\,\Bigg\{ s\,:\, \begin{matrix}          \exists\,C,\ep_0>0\,\,\,:\,\,\,\forall\,0<R<\ep_0\,\,\, \forall\,x\in X \\ S\big( B_R(x),d,R^{1/\theta} \big)\,\leq\,C\,\Big(\frac{R}{R^{1/\theta}}\Big)^s
\end{matrix}    \Bigg\}. $$    
   }
\end{definition}

Since the Assouad spectrum is defined in terms of a unique decreasing scale, it can also be expressed as a limit. In order to maintain a unified notation we write $\ep\,=\,r\,=\,R^{1/\theta}$, obtaining 
\begin{equation}\label{Assouadspectrum}
    \dim_A^\theta(X,d)\,=\,\limsup_{\ep\,\to\,0^+}\,\sup_{x\,\in\,X}\,\frac{\log S\big( B_{\ep^\theta}(x),d,\ep \big)}{(1-\theta)\log (1/\ep)}.
\end{equation}

 The aforementioned notions of dimension are related by the inequalities established in \cite[Proposition 3.1]{FY2018}: for every $\theta\in(0,1),$ \begin{equation}\label{ineqdimensions}
    \overline{\dim}_B(X,d)\,\leq \, \dim_A^\theta(X,d)\,\leq\,\min\bigg\{ \,\frac{\overline{\dim}_B(X,d)}{1-\theta},\,\dim_A(X,d) \bigg\}.
\end{equation}
Moreover, the Assouad spectrum varies continuously in $(0,1)$ and can be extended to $\theta=0$ by $\dim_A^0(X,d)=\overline{\dim}_B(X,d)$. It is also known that, as $\theta\to1^-$, the spectrum converges to the so called \emph{quasi-Assouad dimension}, which is always finite whenever the Assouad dimension is (see \cite{Fraser}). We observe that the Assouad dimension is finite if and only if $(X,d)$ is \emph{doubling}, that is, there exists some $K>0$, which we refer to as the \emph{doubling constant of }$(X,d)$, such that any ball of radius $\ep$ can be covered by $K$ balls of radius $\ep/2$, for every $\ep>0$ (see \cite[Theorem 13.1.1]{Fraser}). For explicit computations and more information on the Assouad spectrum - and its dual notion, the lower spectrum - we refer the reader to the comprehensive references \cite{FY,FY2018}.

\section{Subshifts of compact type}\label{specradiussection}

Let $(X,d)$ be a compact metric space. In what follows we endow the product space $X^\NN$ with the metric $$d^\NN(x,y)\,=\,\sup_{i\,\in\,\NN}\frac{d(x_i,y_i)}{2^{i-1}}.$$ 
A nonempty subset $Y\subset X^\NN$ is called a \emph{subshift} if it is closed and invariant by the shift action  $$\si_\NN\big( (x_i)_{i\,\in\, \NN} \big) = (x_{i+1})_{i\,\in\, \NN}.$$ Consider a subset $\Ga\,\subset X\times X$, whose role will be to prescribe transitions, and the induced set of admissible sequences $$\Ga_\NN\,=\,\{ (x_i)_{i\,\in\, \NN}\,\colon (x_i,x_{i+1})\in\Ga,\,\,\forall\, i\in\NN \},$$ 
which is invariant and we always assume to be non-empty. We refer to $\overline{\Ga_\NN}$ as a \emph{subshift of compact type}, and note that $\Ga_\NN$ is already closed whenever $\Ga$ is. 

Given a continuous map $T\colon X\to X$, we may consider the transition set $$\Ga\,=\,\graph(T)$$ given by its graph. It was shown in \cite{P2024} that the metric mean dimensions of $T$ in $(X,d)$ and of $\si_\NN$ in $(\Ga_\NN,d^\NN)$ coincide:
\begin{equation}\label{invlimit}
\begin{split}
\overline{\mathrm{mdim}}_M(X,d,T)&\,=\,\overline{\mathrm{mdim}}_M(\Ga_\NN,d^\NN,\si_\NN)
\\ \underline{\mathrm{mdim}}_M(X,d,T)&\,=\,\underline{\mathrm{mdim}}_M(\Ga_\NN,d^\NN,\si_\NN)
.\end{split}
\end{equation}
In order to prove Theorem~\ref{holderineq}, we will make use of an upper bound for the $\ep-$entropy of subshifts of compact type in terms of the spectral radius of a suitable matrix, developed in \cite{P2024}. For the sake of completeness, we briefly recall them here.

\subsection{Spectral radius bounds}
 Fix a compact metric space $(X,d)$ and a subset $\Ga\,\subset\,X\times X.$ Given $\ep>0$, let $\cU(\ep)=\{ U_1,...,U_{M(\ep)} \}$ be an open $\ep-$cover of $X.$ Consider the $M(\ep)\times M(\ep)$ matrix $\Ga^\ep$ given by
\begin{equation}\label{gammaepgeneral}
    \Ga_{i,j}^\epsilon = \left \{ \begin{matrix} 1 & \textrm{if } \big( U_i \times U_j \big) \cap \Ga \ne\emptyset \\ 0 & \textrm{otherwise.} \end{matrix} \right. 
\end{equation} 
By counting arguments similar to the ones used for subshifts of finite type, we know that $\|(\Ga^\epsilon)^k\|$  many cylinders composed by elements of $\cU(\ep)$ of size $k$ are enough to cover $\Ga_\NN$, where $\|\cdot\|$ denotes the norm $\|(a_{ij})_{ij}\|=\sum_{i,j}|a_{ij}|$ in the space of $M(\ep)\times M(\ep)$ matrices. This estimate yields the following equivalent formulation of \cite[Lemma 4.1]{Fried}, where $r(A)$ stands for the spectral radius of the matrix $A$.

\begin{lemma}\label{specradgeneral}
   Let $(X,d)$ be a compact metric space and $\Ga\subset X\times X$. Then $$h_\ep(\Ga_\NN,d^\NN,\sigma_\NN) \leq \log r(\Ga^\epsilon).$$
\end{lemma}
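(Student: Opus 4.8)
The plan is to make precise the counting argument sketched just before the statement: relate the $\ep$-entropy of the subshift of compact type $\overline{\Ga_\NN}$ to the number of admissible $k$-cylinders built from the fixed open cover $\cU(\ep)$, and then bound that count by $\|(\Ga^\ep)^k\|$.

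First I would record the elementary combinatorial fact about powers of a $0/1$ matrix: for the matrix $\Ga^\ep$ defined in \eqref{gammaepgeneral}, the entry $\big((\Ga^\ep)^k\big)_{i_0,i_k}$ equals the number of sequences $(i_0,i_1,\dots,i_k)$ with $\Ga^\ep_{i_{\ell},i_{\ell+1}}=1$ for all $0\le\ell<k$, so that $\|(\Ga^\ep)^k\|=\sum_{i_0,i_k}\big((\Ga^\ep)^k\big)_{i_0,i_k}$ counts all such "admissible words" of length $k+1$ in the alphabet $\{1,\dots,M(\ep)\}$. Here admissibility means $(U_{i_\ell}\times U_{i_{\ell+1}})\cap\Ga\neq\emptyset$ for each $\ell$. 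Next I would check that every point $x=(x_i)_{i\in\NN}\in\Ga_\NN$ gives rise to such an admissible word: since $(x_\ell,x_{\ell+1})\in\Ga$ and $\cU(\ep)$ covers $X$, one can choose indices $i_\ell(x)$ with $x_\ell\in U_{i_\ell(x)}$, and then $(U_{i_\ell(x)}\times U_{i_{\ell+1}(x)})\cap\Ga\ni(x_\ell,x_{\ell+1})$, so the word $(i_0(x),\dots,i_{k-1}(x))$ is admissible. Consequently the cylinders $C_{(i_0,\dots,i_{k-1})}:=\{y\in\Ga_\NN: y_\ell\in U_{i_\ell},\ 0\le\ell\le k-1\}$ over admissible words of length $k$ form an open cover of $\Ga_\NN$ whose cardinality is at most $\|(\Ga^\ep)^k\|$.

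The third step is to pass from this cover of $\Ga_\NN$ to a cover witnessing $S(\Ga_\NN,(d^\NN)_{k},\ep)$, i.e. to control the $(d^\NN)_k$-diameter of each nonempty cylinder $C_{(i_0,\dots,i_{k-1})}$. If $y,z\in C_{(i_0,\dots,i_{k-1})}$ then for each $0\le j\le k-1$ both $\sigma_\NN^j(y)$ and $\sigma_\NN^j(z)$ have their $0$-th coordinate in $U_{i_j}$, while the tail coordinates (indices $\ge 1$) are damped by the factors $2^{-(i-1)}$ in the definition of $d^\NN$; a short estimate using $\mathrm{diam}(U_{i_j},d)\le\ep$ and $\mathrm{diam}(X,d)<\infty$ shows $(d^\NN)_k(y,z)\le C\ep$ for a constant $C$ depending only on $(X,d)$ — or, after the standard cosmetic adjustment of replacing $\ep$ by a fixed multiple of it (which does not affect the conclusion since $\log r(\Ga^{c\ep})$ and $\log r(\Ga^\ep)$ differ only through the choice of cover and the final limit in $\ep$ absorbs constants), one gets $(d^\NN)_k$-diameter exactly $\le\ep$. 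Thus $S(\Ga_\NN,(d^\NN)_k,\ep)\le\|(\Ga^\ep)^k\|$. Taking $\tfrac1k\log$, letting $k\to\infty$, and using Gelfand's formula $\lim_k\|(\Ga^\ep)^k\|^{1/k}=r(\Ga^\ep)$ together with the equivalence of the sum norm with any submultiplicative matrix norm yields $h_\ep(\Ga_\NN,d^\NN,\sigma_\NN)\le\log r(\Ga^\ep)$, which is the claim; since $h_\ep(\overline{\Ga_\NN},\cdot)=h_\ep(\Ga_\NN,\cdot)$ for the sub-additive $\ep$-entropy, the statement for the subshift of compact type follows as well.

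The only genuinely delicate point is the diameter bookkeeping in the third step: one must confirm that the damped tail of the metric $d^\NN$ really is harmless, i.e. that restricting finitely many coordinates to $\ep$-small sets forces $(d^\NN)_k$-closeness up to a universal constant, and then track that constant through the homogenization $\ep\mapsto c\ep$ so that it disappears in the limit defining $h_\ep$ and ultimately in $\overline{\mathrm{mdim}}_M$. Everything else is the routine transfer-matrix counting for (compact-alphabet) subshifts of finite type, so I would keep that part brief and cite \cite[Lemma 4.1]{F} for the prototype in the finite-alphabet case.
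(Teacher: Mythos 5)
Your overall route is the same as the paper's: the paper only sketches this lemma, asserting the transfer--matrix count ``$\|(\Ga^\ep)^k\|$ many cylinders of size $k$ cover $\Ga_\NN$'' and deferring to \cite[Lemma 4.1]{F}, and your first two steps (path counting for powers of the $0/1$ matrix, admissibility of the itinerary of any $x\in\Ga_\NN$) together with the final appeal to Gelfand's formula flesh that sketch out correctly.

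However, the step you yourself flag as delicate is wrong as stated. A nonempty cylinder $C_{(i_0,\dots,i_{k-1})}$ of length exactly $k$ does \emph{not} have $(d^\NN)_k$-diameter bounded by $C\ep$ with $C$ depending only on $(X,d)$: taking $j=k-1$ in the Bowen maximum, the second coordinate of $\si_\NN^{k-1}(y)$ is $y_{k+1}$, which the cylinder does not constrain, so $d^\NN\big(\si_\NN^{k-1}(y),\si_\NN^{k-1}(z)\big)$ can be as large as $\diam(X)/2$. This obstruction is an additive constant, so the homogenization $\ep\mapsto c\ep$ by a \emph{fixed} multiple cannot absorb it. The standard repair --- and the one the paper itself uses when it describes dynamical balls in the proof of Theorem~\ref{mdimsubshift} --- is to pad the cylinders: take admissible words of length $k+\ell(\ep)$, where $\ell(\ep)$ is the smallest integer with $2^{\ell(\ep)}\ep\ge\diam(X)$. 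Then every coordinate $m\le k+\ell(\ep)$ contributes at most $\ep$ and every coordinate $m>k+\ell(\ep)$ is damped by $2^{-(m-k)}\le 2^{-(\ell(\ep)+1)}$, giving $(d^\NN)_k$-diameter at most $2\ep$ (a genuinely harmless multiple). The count becomes $\|(\Ga^\ep)^{k+\ell(\ep)-1}\|$, and since $\ell(\ep)$ is independent of $k$ this has the same exponential growth rate $\log r(\Ga^\ep)$ in $k$, so your conclusion survives once this correction is made.
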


We will combine the previous lemma with the following property of matrices to estimate the metric mean dimension.

\begin{theorem}[Gershgorin Circle Theorem, \cite{Gers}]\label{circlethm}
    Let $n\in\NN$ and $A\,=\,(a_{ij})_{i,j}$ be a complex $n \times n$ matrix. Then every eigenvalue of $A$ is contained in the union $$\bigcup_{i=1}^n B^\CC_{R_i}(a_{ii}),$$ where $B^\CC_\delta(z)\,=\,\{w \in\CC: |z-w|\leq\delta\}$ and $R_i\,=\,\sum_{j\ne i}|a_{ij}|$ for every $i=1,...,n.$ 

\end{theorem}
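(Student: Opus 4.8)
\noindent The plan is to give the standard eigenvector argument, which localises the estimate at the coordinate of an eigenvector of largest modulus. Concretely, let $\la$ be an eigenvalue of $A$ and fix an eigenvector $v=(v_1,\dots,v_n)\ne 0$ with $Av=\la v$. First I would choose an index $i\in\{1,\dots,n\}$ realising $|v_i|=\max_{1\le j\le n}|v_j|$; since $v\ne 0$ this maximum is strictly positive, which is what makes the division by $|v_i|$ below legitimate.

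\noindent Next I would write out the $i$-th coordinate of the identity $Av=\la v$, namely $\sum_{j=1}^n a_{ij}v_j=\la v_i$, and isolate the diagonal term, obtaining $(\la-a_{ii})\,v_i=\sum_{j\ne i}a_{ij}v_j$. Taking absolute values, applying the triangle inequality and then dividing by $|v_i|>0$ yields
$$|\la-a_{ii}|\,\le\,\sum_{j\ne i}|a_{ij}|\,\frac{|v_j|}{|v_i|}\,\le\,\sum_{j\ne i}|a_{ij}|\,=\,R_i,$$
where the last inequality uses the maximality of $|v_i|$, which forces $|v_j|/|v_i|\le 1$ for every $j$. Hence $\la\in B^\CC_{R_i}(a_{ii})\subseteq\bigcup_{k=1}^n B^\CC_{R_k}(a_{kk})$, which is the assertion.

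\noindent This proof is entirely classical and presents no genuine obstacle; the only step worth singling out is the choice of the dominating coordinate $i$, everything else being a routine manipulation. Since the statement is needed only as an auxiliary tool, one could alternatively just invoke \cite{Gers} and omit the argument, but the short proof above keeps the exposition self-contained.
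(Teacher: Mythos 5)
Your proof is correct: it is the classical dominating-coordinate argument, and every step (the strict positivity of $|v_i|$, the triangle inequality, and the use of $|v_j|/|v_i|\le 1$) is justified. The paper itself offers no proof of this statement, simply citing \cite{Gers}, so there is nothing to compare against; your self-contained argument is the standard one and entirely adequate.
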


\medskip

 In particular, if we apply Theorem~\ref{circlethm} to a $0-1$ matrix $A$ and its transpose, we get 
\begin{eqnarray}
     &r(A)\,\,&\leq\,\,\,\, \max_{i} \# \{j\,:\,a_{ij}=1\}\label{eqcircle} \\ \label{555}
        r(A)\,=\,&r(A^{T})\,&\leq\,\,\,\,  \max_{j} \# \{i\,:\,a_{ij}=1\}.     
\end{eqnarray}

In the case $X\,=\,[0,1]$ (and also in higher dimensions), the matrix $\Ga^\ep$ can be replaced by another one constructed by using intersections with an $\ep-$grid instead of an open cover. This alternative definition was used by the third named author in \cite[Theorem C]{P2024} to prove that the metric mean dimension of full branched interval maps with infinitely many critical points coincides with the box dimension of the set of such points. To compute this value, the upper bound was obtained by estimating the number of $1$'s in any given \emph{column}, that is, using \eqref{555}. In this paper the estimates will be made for \emph{rows} and we prove Theorem~\ref{holderineq} by using \eqref{eqcircle}.

\medskip

\section{Ahlfors regular spaces}\label{MB}

As a consequence of \cite[Proposition 8.7]{CPV} we know that the local box dimension structure of an alphabet is closely related to the metric mean dimension of the corresponding full shift. In particular, when the space $(X,d)$ is box-dimensionally homogeneous\footnote{We say that $(X,d)$ is \emph{box-dimensionally homogeneous} if $\mathrm{dim}_B(B_\ep(x))\,=\,\mathrm{dim}_B(X),$ for every $x\in X$ and $\ep>0.$}, for every continuous potential $\varphi\colon X^\NN\,\to\,\RR$ we have (cf. \cite[Theorem D]{CPV})$$\overline{\mathrm{mdim}}_M(X^\NN,d^\NN,\si_\NN,\varphi)\,=\, \dim_B(X,d)\,+\,\max_{\mu\,\in\,\cE_{\si_\NN}(X^\NN)}\,\int \varphi\,d\mu,$$ where the upper metric mean dimension with potential was defined in \cite{Tsu2020} and $\cE_{\si_\NN}(X^\NN)$ stands for the set of Borel ergodic $\si_\NN-$invariant probability measures on $X^\NN.$ In what follows, we will approach dimensional homogeneity through a measure-theoretic perspective, which provides a stronger notion of homogeneity, yielding a new formulation for the metric mean dimension. 

For a compact metric space $Z$, denote by  $\cP(Z)$ the space of Borel probability measures on $Z$.

\begin{definition}
    \emph{Given a compact metric space $(X,d)$, we say that $\mu\in\cP(X)$ is Ahlfors regular  if $$\exists \delta\geq0,\,\,\exists,\ep_0,C_1,C_2>0: \qquad  C_1 \,\ep^{\delta}\,\leq\,\mu(B_\ep(x))\,\leq \,C_2 \,\ep^{\delta}\qquad \forall x\in \supp(\mu),\,\forall0<\ep<\ep_0.$$ }
\end{definition}

    Whenever $(X,d)$ admits an Ahlfors regular probability measure $\mu$, we will restrict our study to $\supp(\mu).$ Therefore, without loss of generality, in what follows we will assume $\supp(\mu)=X$. 
    
    The existence of an Ahlfors regular measure on a compact metric space $(X,d)$ reflects the ultimate form of dimensional homogeneity. Namely, it ensures that the Assouad and the lower dimension, which quantify, respectively, the highest and the lowest local complexity in terms of dimension, coincide and are equal to the exponent $\delta$ (see \cite[Theorem 6.4.1]{Fraser}). In particular, all the intermediate notions of dimension such as lower spectrum, packing dimension, Hausdorff dimension, intermediate dimension, box dimension and Assouad spectrum coincide (see \cite{Fa},\cite{FFK},\cite{Fraser} and references therein). Thus, we will always denote $\delta\,=\,\dim(X,d)$, abbreviating into $\dim X$ if the metric is clear. 

    \begin{definition}\label{dimhommms}
         \emph{A triplet $(X,d,\mu)$ is an \emph{Ahlfors regular space} if \begin{equation}\label{ahlforscondition}
             \exists \ep_0,C_1,C_2>0: \qquad  C_1\, \ep^{\dim X}\,\leq\,\mu(B_\ep(x))\,\leq \,C_2\, \ep^{\dim X}\qquad \forall x\in X,\,\forall0<\ep<\ep_0.
         \end{equation} }
    \end{definition}

   The main examples we have in mind are the Euclidean cube $[0,1]^D$ with the Lebesgue measure, a compact Riemannian manifold with the volume measure and (finitely generated) self-similar/self-conformal sets satisfying the open-set-condition with the right dimensional Hausdorff measure (cf. \cite[Corollary 6.4.4]{Fraser}). In fact, the classical notion of Ahlfors regularity is defined with the Hausdorff measure, though it may be formulated for a more general $\mu$ satisfying \eqref{ahlforscondition}. In particular, the only zero dimensional compact Ahlfors regular spaces are the finite ones.

   Every Ahlfors regular space is box-dimensionally homogeneous. However, not every box-dimensionally homogeneous metric space admits an Ahlfors regular measure. An example is the Bedford-McMullen carpet, whose Hausdorff and box dimensions are distinct (cf. \cite{B},\cite{MM}).

\subsection{Minkowski–Bouligand dimension}

The box dimension for compact subsets of the Euclidean space admits an equivalent formulation, the so called Minkowski–Bouligand dimension (see \cite[Proposition 2.4]{Fa}). We briefly recall it here.

 Let $D\in\NN$ and $F\subset\RR^D$ be a compact subset of zero Lebesgue measure (otherwise its box dimension is equal to D) and, for $\ep>0,$ consider its $\ep-$neighbourhood $$B_\ep(F)\,=\,\{ x\in\RR^D: \textrm{dist}(x,F)\,<\,\ep \},$$where $\textrm{dist}(x,F)\,=\,\inf_{a\,\in\,F}\,|x-a|$. This is an open set and it is natural to ask how fast its Lebesgue measure decreases as $\ep\,\to\,0^+$. It turns out that this rate provides information on the dimension, namely, \begin{equation*}\label{boxdim}
    \overline{\mathrm{dim}}_B(F,|\cdot|)\,=\, D + \limsup_{\ep\,\to\,0^+}\frac{\log Leb(B_\ep(F))}{\log(1/\ep)}\quad\text{and}\quad\underline{\mathrm{dim}}_B(F,|\cdot|)\,=\, D + \liminf_{\ep\,\to\,0^+}\frac{\log Leb(B_\ep(F))}{\log(1/\ep)}.
\end{equation*}

We are heading to an analogous result in the dynamical setting. To this end, we need an ambient product space where we can embed and thicken, using $\ep-$neighbourhoods, the space of time $n$ orbits. Afterwards, we evaluate the exponential rate of the $n-$dimensional volume as time increases, and then compute the decay of this quantity as the scale $\ep$ goes to zero. Subshifts of the Euclidean cube are natural candidates for this procedure. In what follows, we consider more general Ahlfors regular alphabets.

\subsection{Proof of Theorem~\ref{mdimsubshift}}

 The main result in this section is an equivalent formulation of the metric mean dimension of subshifts which makes use of the homogeneous structure of the alphabet. The proof is an adaptation of the one in \cite[Proposition 2.4]{Fa}. Let $(X,d,\mu)$ be an Ahlfors regular space. For each $n\geq1$, denote the product measure by $\mu^n\in\cP(X^n).$ Given an arbitrary subshift $Y\subset X^\NN$ and $\ep>0$, let $$Y(n,\ep)\,:=\,  \bigcup_{y\,=\,(y_i)_{i\,\in\,\NN}\,\,\in\,\,Y} B_\ep(y_1)\times B_\ep(y_2)\times\cdots\times B_\ep(y_n)\,\subset\, X^n.$$
We note that $Y(n,\ep)$ is precisely the set $B_\ep(\pi_n(Y))$ with respect to the supremum norm $||\cdot||_\infty$, where $\pi_n\colon X^\NN\,\to\, X^n$ is the projection on the first $n$ coordinates.


Recall that a dynamical ball in the shift space $(X^\NN,d^\NN,\si_\NN)$ is given by $$B_\ep(x,d_n)\,=\, B_\ep(x_1)\times B_\ep(x_2)\times\cdots B_\ep(x_{n})\times B_{2\ep}(x_{n+1})\times\cdots\times B_{2^{\ell}\ep}(x_{n+\ell})\times X\times X\times\cdots$$ where $\ell=\ell(\ep)$ is the smallest positive integer satisfying $2^\ell\ep\geq\diam(X)$.  Observe that, for every $\ep>0$  and $n\geq1$, the measure $\mu^\NN$ satisfies
 \begin{align*}
     \mu^{n+\ell}\big(Y(n+\ell,\ep)\big) \,&=\, \mu^{n+\ell}\bigg(\bigcup_{y\,\in\,Y}B_\ep(y_1)\times\cdots\times B_\ep(y_{n+\ell}) \bigg)\nonumber \\ &\leq\,\mu^\NN \bigg(\bigcup_{y\,\in\,Y}B_\ep(y,d_n)\bigg) \\&\leq\,\mu^n\bigg(\bigcup_{y\,\in\,Y}B_\ep(y_1)\times\cdots\times B_\ep(y_n)\bigg)\,= \,\mu^{n}\big(Y(n,\ep)\big).\nonumber
 \end{align*}
In particular, for the proof we can replace $\mu^n \big(Y(n,\ep)\big)$ by $\mu^\NN \big(Y_{n,\ep}\big)$, where $$Y_{n,\ep}\,=\,\big\{x\,\in\,X^\NN: d_n(x,Y)\leq\ep \big\}.$$

\smallskip

\begin{lemma}\label{subadditive}
    Let $(X,d,\mu)$ be an Ahlfors regular space and $Y\subset X^\NN$ be a subshift. For every $\ep>0$, the sequence $\Big(\log \mu^n\big(Y(n,\ep)\big)\Big)_n$ is subadditive. In particular, the following limits exist $$\lim_{n\,\to\,+\infty}\frac{1}{n}\log \mu^\NN \big( Y(n,\ep) \big)\,\,\,=\,\,\,\lim_{n\,\to\,+\infty}\frac{1}{n}\log \mu^\NN \big( Y_{n,\ep} \big).$$
\end{lemma}
\begin{proof}
    Fix $\ep>0$. For every $y=(y_1,y_2,...)\in Y$ and $n,k\in\NN,$ \begin{align*}
        B_\ep(y_1)\times\cdots\times B_\ep(y_{n+k}) \,\,&\subset \,\,B_\ep(y_1)\times\cdots\times B_\ep(y_{k})\,\,\,\times \bigcup_{z\,=(z_i)_i\,\in\,Y}B_\ep(z_{k+1})\times\cdots\times B_\ep(z_{n+k}) \\ & \subset \,\,B_\ep(y_1)\times\cdots\times B_\ep(y_{k})\,\,\,\times \bigcup_{w\,=(w_i)_i\,\in\,Y}B_\ep(w_{1})\times\cdots\times B_\ep(w_{n}) \\ & \subset \,\,B_\ep(y_1)\times\cdots\times B_\ep(y_{k})\,\,\,\times \,\, Y(n,\ep).
    \end{align*} Therefore, \begin{align*}
        \mu^{n+k} \bigg(Y(n+k,\ep) \bigg)\,\,&\leq \,\,\mu^{n+k}\bigg(\bigcup_{y\,=\,(y_i)_i\,\in\,Y}B_\ep(y_1)\times\cdots\times B_\ep(y_{k})\,\,\,\times \,\, Y(n,\ep)\bigg) \\ & = \,\,\mu^{n+k}\bigg(Y(k,\ep)\,\,\times \,\,Y(n,\ep)\bigg)\\ & = \,\,\mu^{k}\bigg(Y(k,\ep)\bigg)\,\, \cdot\,\,\mu^n\bigg(Y(n,\ep)\bigg).
    \end{align*}
\end{proof}

Fix $\ep>0$ and $n\geq 1$. If $Y$ can be covered by $N(n,\ep)$ distinct $(n,\ep)-$dynamical balls, then $Y_{n,\ep}$ can be covered by the concentric dynamical balls of radius $2\ep.$ Then 
$$\mu^\NN \big( Y_{n,\ep} \big) \, \leq \, N(n,\ep)  \big( C_2 (2\ep)^{\mathrm{dim}X}\big)^n,$$
and therefore
\begin{equation}\label{tuboupperbound}
    \lim_{n\,\to\,+\infty}\frac{1}{n}\log \mu^\NN \big( Y_{n,\ep} \big)\,\leq\, \limsup_{n\,\to\,+\infty}\frac{1}{n}\log N(n,\ep) + \log\big(C_2(2\ep)^{\mathrm{dim}X}\big).
\end{equation} 
Hence, 
$$ \limsup_{\ep\,\to\,0^+} \frac{1}{\log(1/\ep)} \lim_{n\,\to\,+\infty}\frac{1}{n}\log \mu^\NN \big( Y_{n,\ep} \big)\,\leq\,\overline{\mathrm{mdim}}_M(Y,d^\NN ,\si_\NN) \,-\,\dim X. $$

For the converse inequality, if there are $S_1(n,\ep)$ distinct $(n,\ep)-$separated points $y_1,\cdots,y_{S_1(n,\ep)}$ in $Y$, then the $(n,\ep/3)-$dynamical balls centered at these points are disjoint. Then 
$$\mu^\NN \big( Y_{n,\ep} \big) \, \geq \, \sum_{i=1}^{S_1(n,\ep)} \mu^\NN \big( B_{\ep/3}(y_i,d_n) \big) \,\geq\, S_1(n,\ep) \big( C_1(\ep/3)^{\dim X}\big)^{n+\ell(\ep)},$$ 
so  
\begin{equation}\label{tubolowerbound}
    \lim_{n\,\to\,+\infty}\frac{1}{n}\log \mu^\NN \big( Y_{n,\ep} \big)\,\geq\, \limsup_{n\,\to\,+\infty}\frac{1}{n}\log S_1(n,\ep) + \log(C_1(\ep/3)^{\dim X}).
\end{equation} 
Thus, 
$$ \limsup_{\ep\,\to\,0^+} \frac{1}{\log(1/\ep)} \lim_{n\,\to\,+\infty}\frac{1}{n}\log \mu^\NN \big( Y_{n,\ep} \big)\,\geq\,\overline{\mathrm{mdim}}_M(Y,d^\NN ,\si_\NN) \,-\,\dim X. $$  This concludes the proof of \eqref{eq1mdimsubshift}. The statement for the lower metric mean dimension is proved analogously.

Regarding the entropy, taking the limsup as $\ep\to0^+$ in \eqref{tuboupperbound} and \eqref{tubolowerbound}, we obtain \begin{equation}\label{h1h2}
    h_1\,\leq\,h_{top}(Y,\si_\NN)\,\leq\,h_2
\end{equation} where 
\begin{eqnarray*}    h_1&=&\limsup_{\ep\,\to\,0^+}\lim_{n\,\to\,+\infty}\frac{1}{n}\log \Big(\frac{\mu^n\big(Y(n,\ep)\big)}{(C_2\,2^{dim X})^n\,\ep^{n\,\dim X}}\Big) \\ h_2&=&\limsup_{\ep\,\to\,0^+}\lim_{n\,\to\,+\infty}\frac{1}{n}\log \Big(\frac{\mu^n\big(Y(n,\ep)\big)}{(C_1\,3^{-\dim X})^n\,\ep^{n\,\dim X}}\Big).
\end{eqnarray*}
When $\dim X=0$, $X$ is a finite set, say $X=\{x_1,...,x_q\}$. If we consider the normalized counting measure $\mu$, then $C_1=C_2=1/q$ and \eqref{entformula}, in this setting, is an immediate consequence of \eqref{h1h2}, with $\la=1/q$. On the other hand, if $\dim X>0$, we have two possibilities. If $h_{top}(Y,\si_\NN)=+\infty$, then \eqref{entformula} holds with $\la= C_1\,3^{-\dim X}$. If $h_{top}(Y,\si_\NN)<+\infty$, let us show that $$-\infty\,\,\,<\,\,\,\limsup_{\ep\,\to\,0^+}\lim_{n\,\to\,+\infty}\frac{1}{n}\log \Big(\frac{\mu^n\big(Y(n,\ep)\big)}{\ep^{n\,\dim X}}\Big)\,\,\,<\,\,\,+\infty. $$ Indeed, by Definition~\ref{dimhommms}, for any $y\in Y$ one has $$\mu^n\big(Y(n,\ep)\big)\,\,\,\geq\,\,\,\mu^n\big( B_\ep(y_1)\times...\times B_\ep(y_n) \big)\,\,\,\geq (C^1\,\ep^{\dim X})^n$$ and therefore $$\limsup_{\ep\,\to\,0^+}\lim_{n\,\to\,+\infty}\frac{1}{n}\log \Big(\frac{\mu^n\big(Y(n,\ep)\big)}{\ep^{n\,\dim X}}\Big)\,\,\,\geq\,\,\,\log C_1.$$ Moreover, $$\limsup_{\ep\,\to\,0^+}\lim_{n\,\to\,+\infty}\frac{1}{n}\log \Big(\frac{\mu^n\big(Y(n,\ep)\big)}{\ep^{n\,\dim X}}\Big)\,\,\,\leq\,\,\,h_{top}(Y,\si_\NN)\,\,\,+\,\,\, \log \big( C_2 \,2^{\dim X}\big)\,\,\,<\,\,\,+\infty. $$
Now, consider the strictly decreasing map $G \colon ]0, +\infty[\to\RR$  given by
$$G(t)\,\,=\,\,\,\limsup_{\ep\,\to\,0^+}\lim_{n\,\to\,+\infty}\frac{1}{n}\log \Big(\frac{\mu^n\big(Y(n,\ep)\big)}{t^n\,\ep^{n\,\dim X}}\Big)\,\,\,=\,\,\,\limsup_{\ep\,\to\,0^+}\lim_{n\,\to\,+\infty}\frac{1}{n}\log \Big(\frac{\mu^n\big(Y(n,\ep)\big)}{\ep^{n\,\dim X}}\Big)\,\,-\,\,\log t.$$
Observe that $G$ is continuous, $G(C_2\, 2^{\dim X}) = h_1$ and $G(C_1 \, (1/3)^{\dim X}) = h_2$. Thus, due to \eqref{h1h2} and the intermediate value theorem, there exists a unique $\la \in [C_1 \,(1/3)^{\dim X}, C_2\, 2^{\dim X}]$ such that $G(\la) = h_{top}(Y, \si_\NN)$, as claimed. The proof of Theorem~\ref{mdimsubshift} is complete.\qed

\medskip

\subsection{Proof of Corollary~\ref{iterados}}
    For a map $F\colon X\,\to\, X$, $n\in\NN$ and $\ep>0$, we denote $$Y(F,n,\ep)\,=\,\bigcup_{x\,\in\, X} B_\ep(x)\times B_\ep(F(x))\times\cdots\times B_\ep(F^{n-1}(x)).$$ Fix $k\geq1$ and observe that, for every $\ep>0$ and large enough $n\geq1$, we have 
        \begin{align*}
             Y(T,kn,\ep)\,& =\,\bigcup_{x\,\in\, X} B_\ep(x)\times B_\ep(T(x))\times\cdots\times B_\ep(T^{kn-1}(x)) \\ &\subset\, \bigcup_{x\,\in\, X} B_\ep(x)\times X\times...\times X \\  & \hspace{1.3cm} \times B_\ep(T^k(x))\times X\times...\times X \\ &\hspace{1.6cm} \vdots \\  & \hspace{1.3cm} \times B_\ep(T^{(n-1)k}(x))\times X\times...\times X \\ &:=\,\,\,\,\,A.
        \end{align*}
Then
$$\mu^{kn}\big(Y(T,kn,\ep) \big)\,\,\,\,\leq\,\,\, \mu^{kn}\big(A \big)\,\,\,\,=\,\,\,\, \mu^{n}\big(Y(T^k,n,\ep) \big),$$ and therefore 
\begin{align*}
\lim_{n\,\to\,+\infty}\frac{1}{n}\log \mu^n \big(Y(T^k,n,\ep) \big)\, \,\,\,&\geq\,\,\, \, k \,\,\lim_{n\,\to\,+\infty}\frac{1}{kn}\log \mu^{kn} \big(Y(T,kn,\ep) \big) \\& = \,\,\,\, k\,\, \lim_{n\,\to\,+\infty}\frac{1}{n}\log \mu^{n} \big(Y(T,n,\ep) \big).
\end{align*} By Corollary~\ref{lowdimformula}, we conclude that
\begin{align*}
    \overline{\mathrm{mdim}}_M(X,d ,T^k)\, \,\,\, & =\,\,\,\,  \dim X\,\,\,+\,\,\, \limsup_{\ep\,\to\,0^+} \frac{1}{\log(1/\ep)} \lim_{n\,\to\,+\infty}\frac{1}{n}\log \mu^n \big( Y(T^k,n,\ep) \big) \\&\geq\,\,\,\,  \dim X\,\,\,+\,\,\, k\,\,\limsup_{\ep\,\to\,0^+} \frac{1}{\log(1/\ep)} \lim_{n\,\to\,+\infty}\frac{1}{n}\log \mu^n \big( Y(T,n,\ep) \big)\\&=\,\,\,\, k\,\,\,\overline{\mathrm{mdim}}_M(X,d ,T)\, \,\,-\,\,\,(k-1)\,\dim X.
\end{align*}
The statement for the lower metric mean dimension is proved analogously. \qed
\medskip

\section{Proof of Theorem~\ref{holderineq}}

   Let $(X,d)$ be a compact metric space of finite Assouad dimension with doubling constant $K$ (see Subsection~\ref{fractaldimensions}), and $T\colon X\to X$ be $\alpha-$H\"older for some $\alpha\,\in\,(0,1)$. We will make use of the information in Section~\ref{specradiussection} to estimate the metric mean dimension of $T$, by relating its H\"older exponent, the transition matrix \eqref{gammaepgeneral} and the Assouad spectrum of parameter $\theta=\alpha$ (see Definition~\ref{spectrum}).

 Fix $\ep>0$. Let $x_1,...,x_{N}\,\in\,X$ be a maximal collection of $\ep-$separated points and consider the open cover $\cU(2\ep)\,=\,\{ B_{\ep}(x_1),...,B_{\ep}(x_N) \}$ of $X$, whose diameter is $2\ep$. Let us estimate the number of elements in this cover that can intersect a given ball.

\begin{lemma}
    For every $z\in X$ and $R>2\ep>0$, $$ \# \{i:B_{\ep}(x_i)\,\cap\,B_{R}(z) \ne\emptyset\}\,\leq\, K\,\sup_{x\,\in\,X}\,S\big( B_R(x),d,\ep \big),$$ where $K$ is the doubling constant of $(X,d).$
\end{lemma}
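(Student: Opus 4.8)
The plan is to bound the number of cover elements $B_\ep(x_i)$ meeting a ball $B_R(z)$ by first passing to a slightly larger ball, then exploiting the $\ep$-separation of the $x_i$ together with the doubling property. First I would observe that if $B_\ep(x_i) \cap B_R(z) \ne \emptyset$, then by the triangle inequality $x_i \in B_{R+\ep}(z)$, and since $R > 2\ep$ we have $R + \ep < \tfrac{3}{2}R < 2R$, so all such centers lie in $B_{2R}(z)$. The set $I = \{x_i : B_\ep(x_i) \cap B_R(z) \ne \emptyset\}$ is then an $\ep$-separated subset of $B_{2R}(z)$ (it inherits separation from the maximal collection), hence its cardinality is at most $S(B_{2R}(z), d, \ep/2)$, because any open cover of $B_{2R}(z)$ by sets of diameter $\le \ep/2$ can contain at most one point of an $\ep$-separated set per cover element.

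The remaining issue is the mismatch between the radius $2R$ and scale $\ep/2$ appearing here and the radius $R$, scale $\ep$ appearing in the statement. This is exactly what the doubling constant $K$ absorbs: by the definition of doubling, a ball of radius $2R$ can be covered by a bounded number of balls of radius $R$, and a ball of radius $\ep$ can be covered by a bounded number of balls of radius $\ep/2$; iterating the doubling property a fixed number of times (depending only on the geometry, not on $R$ or $\ep$, since only the ratio of scales matters) yields $S(B_{2R}(z), d, \ep/2) \le C \cdot \sup_{x \in X} S(B_R(x), d, \ep)$ for a constant $C$ expressible in terms of $K$. One must be slightly careful to state the covering-number monotonicity lemmas (covering $B_{2R}$ by $K$-many $B_R$'s, and refining an $\ep$-cover to an $\ep/2$-cover at the cost of a factor $K$ per element) so that the final constant is genuinely $K$ as claimed, or at worst a fixed power of $K$ which can then be renamed; the cleanest route is to note $R + \ep < 2R$ only needs $\ep < R$, so in fact $I$ is an $\ep$-separated subset of $B_{2R}(z)$, and $B_{2R}(z)$ is covered by $K$ balls of radius $R$, each of which contains at most $S(B_R(\cdot), d, \ep)$ points of an $\ep$-separated set (using that an $\ep$-separated set inside a radius-$R$ ball has cardinality bounded by $S(B_R(\cdot),d,\ep)$ via the standard separated-vs-covering comparison with a factor that the doubling lets us keep at $K$).

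The main obstacle is purely bookkeeping: tracking the constants so that exactly one factor of $K$ appears, rather than $K^2$ or $K^{O(1)}$. I expect to handle this by choosing the most economical comparison — namely, covering $B_{2R}(z)$ by $K$ balls of radius $R$ (one application of doubling, since $2R/R = 2$), and noting that within each such radius-$R$ ball an $\ep$-separated set has at most $S(B_R(x), d, \ep)$ elements because distinct points of an $\ep$-separated set cannot lie in a common set of diameter less than $\ep$, so an $\ep$-cover of that ball assigns at most one separated point to each of its elements. Summing over the $K$ balls gives the bound $K \sup_{x \in X} S(B_R(x), d, \ep)$, completing the proof. No delicate estimates are needed beyond these triangle-inequality and pigeonhole arguments; the only genuinely necessary structural input is finiteness of the Assouad dimension, which (as recorded in the preliminaries) is equivalent to $(X,d)$ being doubling and hence guarantees the constant $K$ exists.
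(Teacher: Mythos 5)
Your proposal is correct and takes essentially the same route as the paper: both arguments pass from the intersecting cover elements to their $\ep$-separated centers, which land in $B_{2R}(z)$ thanks to $R>2\ep$, and both convert this to the stated bound by one application of the doubling constant $K$ together with the standard separated-versus-covering comparison. The only cosmetic difference is the order of the last two steps (the paper bounds the separated set by $S(B_{2R}(z),d,\ep)$ first and then doubles, while you double first and compare within each radius-$R$ ball).
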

\begin{proof}
    Recall that the balls $\{B_{\ep/2}(x_i):i=1,...,N\}$ are pairwise disjoint. Reordering if necessary, assume that the elements of $\cU(2\ep)$ that intersect $B_{R}(z)$ are $B_{\ep}(x_1),...,B_{\ep}(x_q).$ Then $\bigcup_{i=1}^q B_{\ep/2}(x_i)\,\subset B_{R+2\ep}(z)$, where the union is disjoint. Hence 
        $$q \, \leq\,S\big( B_{R+2\ep}(z),d,\ep \big)\,\leq\,S\big( B_{2R}(z),d,\ep \big)\,\leq\,K\,\sup_{x\,\in\, X}\, S\big( B_R(x),d,\ep \big).$$
\end{proof}
Define the matrix $\Ga^{2\ep}$ associated to the cover $\cU(2\ep)$ as in \eqref{gammaepgeneral}. Since the map $T$ is $\alpha-$H\"older, for every $i=1,...,N$ there exists some $z_i\in X$ such that $T\big(B_{\ep}(x_i)\big)\,\subset\, B_{C_\alpha\,\ep^\alpha}(z_i)$. By the previous Lemma, $T\big(B_{\ep}(x_i)\big)$ can intersect at most  \begin{equation}\label{ooo}
    K\,\sup_{x\,\in\, X}\, S\big( B_{C_\alpha\,\ep^\alpha}(x),d,\ep \big)\,\leq\, K^{1+M}\,\sup_{x\,\in\, X}\, S\big( B_{\ep^\alpha}(x),d,\ep \big)
\end{equation} elements of $\cU(2\ep)$ for every $i=1,...,N$, where $M=\big\lceil \frac{\log2}{\log C_\alpha}\big\rceil$ and $C_\alpha$ is the H\"older constant of $T$. This means that \eqref{ooo} provides an upper bound for the number of $1$'s on each line of $\Ga^{2\ep}$. By Lemma~\ref{specradgeneral} and \eqref{eqcircle} we have $$h_{2\ep}(\Ga_\NN,d^\NN,\si_\NN)\,\leq\,\log r(\Ga^{2\ep})\,\leq\,(1+M) \log K\,+\,\sup_{x\,\in\, X}\, \log S\big( B_{\ep^\alpha}(x),d,\ep \big),$$ where $\Ga_\NN$ is the subshift of compact type induced by $\Ga\,=\,\graph(T)$. Combining the above estimate with \eqref{invlimit} we conclude that$$\overline{\mathrm{mdim}}_M(X,d,T)\,=\,\overline{\mathrm{mdim}}_M(\Ga_\NN,d^\NN,\si_\NN)\,\leq\,(1-\alpha)\, \dim_A^\alpha(X,d).$$
This ends the proof of the first part of Theorem~\ref{holderineq}. The second part is a straightforward consequence of the last inequality when $\dim_A(X,d)>0$ and the right inequality in \eqref{ineqdimensions}.

\qed

\section{Proof of Theorem~\ref{mdiminterval}}\label{proofholder}

We will split the statement of Theorem~\ref{mdiminterval} in two. Let's start by applying Corollary~\ref{lowdimformula} to compute the upper and lower metric mean dimensions of $T_{a,b}$ (defined in Subsection~\ref{app}) with respect to the Euclidean metric.

\begin{proposition}\label{mdiminterval1}
Let $T_{a,b}$ be the map satisfying the conditions (C1)-(C3) in Theorem~\ref{mdiminterval}. Then 
\begin{eqnarray*}
\overline{\mathrm{mdim}}_M([0,1],|\cdot|,T_{a,b})&=&\limsup_{k\,\to\,+\infty}\frac{\log b_k}{\log(1/\ep_k)}\\
     \underline{\mathrm{mdim}}_M([0,1],|\cdot|,T_{a,b})&=&\liminf_{k\,\to\,+\infty}\frac{\log b_k}{\log(1/\ep_k)}.
     \end{eqnarray*}
\end{proposition}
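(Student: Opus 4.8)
The strategy is to apply Corollary~\ref{lowdimformula} to the map $T_{a,b}$ acting on the Ahlfors regular space $([0,1],|\cdot|,\Leb)$, so that we only need to estimate the Lebesgue measure of the thickened orbit sets $Y(T_{a,b},n,\ep)\subset[0,1]^n$. Since $\dim([0,1],|\cdot|)=1$, the formulas in Corollary~\ref{lowdimformula} reduce the problem to showing that $$\limsup_{\ep\to0^+}\frac{1}{\log(1/\ep)}\limsup_{n\to\infty}\frac1n\log\Leb^n\big(Y(T_{a,b},n,\ep)\big)\,=\,\limsup_{k\to\infty}\frac{\log b_k}{\log(1/\ep_k)}-1,$$ and similarly for the liminf versions. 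I would first reduce to a single scale: fix $\ep>0$ and let $k=k(\ep)$ be the largest index with $\ep_k\geq\ep$ (so the horseshoes $f_1,\dots,f_k$ are ``visible'' at scale $\ep$, while those with index $>k$ have injectivity domains smaller than $\ep$ and cannot be dynamically distinguished). The point is that at scale $\ep$ the map looks like a product of $k$ horseshoes on the intervals $J_1,\dots,J_k$, plus a tail $[a_k,1]$ of length $1-a_k$ on which the dynamics is $\ep$-invisible.

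\textbf{Counting the thickened orbits.} On each $J_j$ with $j\leq k$, the map $T_{a,b}$ restricted to $J_j$ is a full-branched piecewise affine map with $b_j$ branches of slope $|J_j|/\ep_j = b_j$, wait---the slope is $b_j$ only after rescaling; more precisely $T_j^{-1}\circ f_j\circ T_j$ has $b_j$ affine full branches each of length $\ep_j=|J_j|/b_j$ and slope $b_j$. The number of distinct length-$n$ itineraries visible at scale $\ep$ on $J_j$ is governed by how many times one can iterate before the branches shrink below $\ep$: an injectivity domain of generation $m$ inside $J_j$ has length $\ep_j\, b_j^{-(m-1)}\cdot|J_j|/\ep_j$... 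I would organize this as: the number of $(n,\ep)$-separated orbits inside $J_j$ grows like $b_j^{\min\{n,\, m_j(\ep)\}}$ where $m_j(\ep)$ is the largest $m$ with the generation-$m$ branch length $\geq\ep$, i.e.\ roughly $m_j(\ep)\approx \log(|J_j|/\ep)/\log b_j$. Summing the volumes: each surviving orbit of length $n$ on $J_j$ contributes a tube of $n$-dimensional volume $\approx \ep^{n}$ times a factor accounting for the expansion, and after the branches stop refining the orbit just fills an $\ep$-neighborhood. The upshot should be $$\frac1n\log\Leb^n\big(Y(T_{a,b},n,\ep)\big)\;\xrightarrow[n\to\infty]{}\;\log\ep\;+\;\max_{1\leq j\leq k(\ep)}\log b_j\;+\;o(1)\;=\;\log\ep+\log b_{k(\ep)},$$ using (C3) that $(b_k)$ is increasing so the maximum is attained at $j=k(\ep)$; the tail $[a_k,1]$ and the lower-index horseshoes are absorbed into lower-order terms because their contribution to the exponential growth rate in $n$ is dominated. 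Dividing by $\log(1/\ep)$ and noting $\log b_{k(\ep)}/\log(1/\ep)$ is, up to the comparison $\ep\asymp\ep_{k(\ep)}$ built from (C1)-(C2), asymptotically $\log b_{k(\ep)}/\log(1/\ep_{k(\ep)})$, one recovers the stated $\limsup$ and $\liminf$.

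\textbf{Main obstacle.} The delicate point is the interchange of the limit in $n$ with the sum over horseshoes, and controlling the ``boundary effects'' near the accumulation point $1$: there are infinitely many horseshoes, but only finitely many ($k(\ep)$ of them) are seen at scale $\ep$, and one must verify that the remaining ones---whose injectivity domains are smaller than $\ep$---genuinely contribute nothing to the exponential rate, and also that an orbit can spend arbitrarily long times drifting through the shrinking intervals $J_{k+1},J_{k+2},\dots$ toward $1$ without generating extra separated orbits at scale $\ep$. Here conditions (C1) and (C2) are essential: $(a_k-a_{k-1})$ decreasing to zero forces $|J_j|\to 0$ and makes the scales $\ep_k$ themselves decreasing, so that the relation between $\ep$ and $k(\ep)$ is monotone and the comparison $\log(1/\ep)\sim\log(1/\ep_{k(\ep)})$ (up to bounded additive error) holds along the relevant subsequences. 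I would also need the elementary estimate that for a single affine full-branched horseshoe with $b$ branches of size $\ep_0$, one has $\Leb^n(Y(n,\ep))\asymp b^{\min\{n, \log(\ep_0 b/\ep)/\log b\}}\,\ep^{n}$ up to multiplicative constants independent of $n$; this is a direct computation but needs to be done carefully to extract exactly the exponent $\log b$ in the rate. Once these pieces are in place, plugging into the four formulas of Corollary~\ref{lowdimformula} and taking $\limsup$/$\liminf$ in $\ep$ yields the two displayed equalities, completing the proof of Proposition~\ref{mdiminterval1}.
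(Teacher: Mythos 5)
Your overall strategy coincides with the paper's: apply Corollary~\ref{lowdimformula}, fix the scale $\ep$ and the index $k=k(\ep)$ with $\ep_{k+1}\leq\ep<\ep_k$, show that the ``visible'' part $[0,a_k]$ contributes the rate $\log b_k+\log\ep\approx\log|J_k|$ and that the tail contributes no more; your displayed ``upshot'' $\tfrac1n\log\Leb^n(Y(n,\ep))\to\log\ep+\log b_{k(\ep)}$ is exactly what the paper establishes. The problem is that the ``elementary estimate'' you say you would need at the end is false, and it contradicts your own upshot. For a single affine full-branch horseshoe $J$ with $b$ branches, the number of $(n,\ep)$-separated points is of order $b^{n-1}|J|/\ep$: the exponential growth rate in $n$ is $\log b$ for \emph{every} fixed $\ep$, and it does not saturate once the generation-$m$ cylinders drop below $\ep$, because two points in the same small cylinder still separate under further iteration (the map keeps expanding). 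Hence $\tfrac1n\log\Leb^n(Y(n,\ep))\to\log(b\ep)$ up to an additive constant independent of $\ep$, whereas your formula $b^{\min\{n,\,m_j(\ep)\}}\ep^n$ gives the rate $\log\ep$ for large $n$; fed into Corollary~\ref{lowdimformula} this would yield $1+\limsup_{\ep\to0^+}\frac{\log\ep}{\log(1/\ep)}=0$ for every choice of $a,b$, which is plainly wrong. You must replace that lemma by the correct count. The paper does this cheaply: the upper bound follows from $h_\ep([0,a_k])\leq\log b_k$ together with $\Leb^n(Y([0,a_k],n,\ep))\leq N(4\ep)^n$ for a minimal $(n,\ep)$-spanning set of cardinality $N$, and the lower bound follows from the inclusion $J_{k+1}^n\subset Y(n,\ep)$, valid because $\ep\geq\ep_{k+1}$ allows every $n$-tuple in $J_{k+1}^n$ to be $\ep$-shadowed by a genuine orbit.

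Second, the ``main obstacle'' you single out — orbits spending long times drifting through the invisible horseshoes $J_{k+1},J_{k+2},\dots$ towards $1$ — does not exist: each $J_s$ is forward invariant, so an orbit starting in $J_s$ never leaves it. Consequently the tail satisfies $Y([a_k,a_M],n,\ep)\subset\bigcup_{s>k}\big(B_\ep(J_s)\big)^n$, whose $\Leb^n$-measure is at most $(M-k)\,(|J_{k+1}|+2\ep)^n\leq(M-k)\,|J_k|^n(1+2/b_k)^n$, giving a rate $\leq\log|J_k|+\log4$, dominated by the visible part. Observing this invariance is what turns your ``delicate point'' into a one-line estimate; without it (or some substitute) your argument for the tail is not complete.
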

\begin{proof}    

We start recalling that, for every $s\geq1$ and $n\geq1$, we can partition $J_s$ in $b_s^n$ subintervals $J_s(i_1,\cdots,i_n)$, for $i_1,...,i_n\in\{1,\cdots,b_s\}$, satisfying (see Example~$10.3$ in \cite{CPV}) $$T_{a,b}^j(J_s(i_1,\cdots,i_n))\,=\, J_s(i_{j+1},\cdots,i_n)\qquad \forall j\in \{1,\cdots,n-1\}.$$

Given $\ep>0$, let $k=k(\ep)$ be the unique positive integer satisfying $\ep_{k+1}\,\leq\,\ep\,<\,\ep_k$. Consider a positive integer $M=M(\ep)\gg k$ such that $a_{M-1}>1-\ep.$ Then, for every $n\geq1,
$\begin{align*}
Y(n,\ep)\,&=\,\bigcup_{x\,\in\, [0,1]} B_\ep(x)\times B_\ep(T(x))\times\cdots\times B_\ep(T^{n-1}(x)) \\& =\, \bigcup_{s=1}^M \,\Bigg(\,\bigcup_{x \,\in\, J_s} B_\ep(x)\times B_\ep(T(x))\times\cdots\times B_\ep(T^{n-1}(x)) \,\Bigg)\\&  :=\, Y([0,a_k],n,\ep)\,\cup\, Y([a_k,a_M],n,\ep),
    \end{align*}
where \begin{eqnarray*}
    Y([0,a_k],n,\ep) &=& \bigcup_{s=1}^k \,\Bigg(\,\bigcup_{x \,\in\, J_s} B_\ep(x)\times B_\ep(T(x))\times\cdots\times B_\ep(T^{n-1}(x)) \,\Bigg) \\ Y([a_k,a_M],n,\ep) &=& \bigcup_{s=k+1}^M \,\Bigg(\,\bigcup_{x \,\in\, J_s} B_\ep(x)\times B_\ep(T(x))\times\cdots\times B_\ep(T^{n-1}(x)) \,\Bigg).
\end{eqnarray*}

Let us estimate the $n-$th Lebesgue measure of each of these sets separately. Consider a minimal $(n,\ep)-$spanning set $x_1,\cdots,x_N$ in $[0,a_k].$ Then $$Y([0,a_k],n,\ep)\,\subset\, \bigcup_{i=1}^N B_{2\ep}(x_i)\times B_{2\ep}(T(x_i))\times\cdots\times B_{2\ep}(T^{n-1}(x_i)),$$ and therefore \begin{align}
    \limsup_{n\,\to\,+\infty}\frac{1}{n}\log \Leb^n \big( Y([0,a_k],n,\ep) \big)\,&\leq\, h_\ep([0,a_k])\,+\, \log(4\ep) \nonumber \\&\leq\, \log b_k\,+\,\log\ep_k \,+\,\log4  \nonumber\\&=\, \log|J_k|+\log4. \label{ub1} 
\end{align} Besides, by invariance of each $J_s$, we can consider the coarse estimate $$Y([a_k,a_M],n,\ep)\,\subset\, \bigcup_{s=k+1}^M \big(B_\ep(J_s)\big)^n.$$ As $(|J_s|)_s$ is decreasing, we get
\begin{align}
    \Leb^n \big( Y([a_k,a_M],n,\ep)\big) \,&\leq\, \sum_{s=k+1}^M\big(|J_s|+2\ep\big)^n \nonumber \\ &\leq\, \big(M(\ep) - k(\ep)\big) \,\cdot\, \big(|J_k|+2\ep_k\big)^n \nonumber \\&=\, \big(M(\ep) - k(\ep)\big) \,\cdot\, |J_k|^n\,\cdot\, \big(1+\frac{2}{b_k}\big)^n,\nonumber
\end{align} so 
\begin{equation}\label{ub2}
    \limsup_{n\,\to\,+\infty}\frac{1}{n}\log \Leb^n \big( Y([a_k,a_M],n,\ep)\big)\,\leq\, \log|J_k|\,+\,\log\big(1+\frac{2}{b_k}\big)\,\leq\,\log|J_k|\,+\,\log4.
\end{equation}

\smallskip
\noindent Bringing together \eqref{ub1}, \eqref{ub2} and Lemma~\ref{subadditive} we obtain \begin{equation}\label{ub}
    \lim_{n\,\to\,+\infty}\frac{1}{n}\log \Leb^n \big( Y(n,\ep)\big)\,\leq\, \log|J_k|\,+\,\log4.
\end{equation} 

 On the other hand, it is easy to see that for every $n\geq1$,  $J_{k+1}^n\subset Y(n,\ep)$. Indeed, given $(x_1,\cdots,x_n) \in J_{k+1}^n$ there exists $i_j$ such that $x_j\,\in\,J_{k+1}(i_j)$ for every $j=1,...,n$.
Then, any element  $y\in J_{k+1}(i_1,\cdots,i_n)$ $\ep_{k+1}-$shadows this trajectory along the first $n$ iterates. Therefore, $$(x_1,\cdots,x_n)\,\in B_\ep(y)\times B_\ep(T(y))\times\cdots\times B_\ep(T^{n-1}(x))\,\subset\,Y(n,\ep).$$ Combining this information with \eqref{ub} we get \begin{equation*}
   \log|J_{k+1}|\,\leq\, \lim_{n\,\to\,+\infty}\frac{1}{n}\log \Leb^n \big( Y(n,\ep) \big)\,\leq\,\log|J_k|+\log4,
\end{equation*} thus \begin{equation*}
     \frac{\log|J_{k+1}|}{\log(1/\ep_{k+1})}\,\leq\, \frac{1}{\log(1/\ep)}\lim_{n\,\to\,+\infty}\frac{1}{n}\log \Leb^n \big( Y(n,\ep) \big)\,\leq\,\frac{\log|J_k|+\log4}{\log(1/\ep_k)}.
\end{equation*}

\medskip

\noindent Proposition~\ref{mdiminterval1} now follows from Corollary~\ref{lowdimformula} and the fact that $$1\,+\, \frac{\log|J_{s}|}{\log(1/\ep_{s})}\,=\, \frac{\log b_s}{\log(1/\ep_{s})} \qquad\forall s\geq1.$$

\end{proof}

The previous reasoning does not depend on the affine structure of the $f_k$'s. In fact, not even continuity was used: as long as, for every $k\geq1$, $\tilde{f}_k$ satisfies both the geometric constraint (equidistribution of critical points) and the topological constraint (strictly monotone full branches), the maps $\tilde{T}_{a,b}$ and $T_{a,b}$ have the same upper and lower metric mean dimensions. 

To end the proof of Theorem~\ref{mdiminterval} we are left to link the upper metric mean dimension and the H\"older regularity of the map $T_{a,b}$. 

\smallskip

\begin{proposition}\label{mdiminterval2}
    Let $T_{a,b}$ be the map satisfying the conditions (C1)-(C3) in Theorem~\ref{mdiminterval}. Then, $$\sup\cH(T_{a,b})\,=\,1\,-\,\overline{\mathrm{mdim}}_M([0,1],|\cdot|,T_{a,b}),$$ where $\cH(T_{a,b})\,=\,\{\alpha \in(0,1): T_{a,b} \,\,\,\text{is}\,\,\, \alpha -\text{H\"older}\}\,\cup\,\{0\}.$
\end{proposition}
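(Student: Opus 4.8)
The plan is to prove the two inequalities $\sup\cH(T_{a,b}) \leq 1 - \overline{\mathrm{mdim}}_M$ and $\sup\cH(T_{a,b}) \geq 1 - \overline{\mathrm{mdim}}_M$ separately, using the formula from Proposition~\ref{mdiminterval1} that identifies $\overline{\mathrm{mdim}}_M([0,1],|\cdot|,T_{a,b})$ with $\limsup_k \frac{\log b_k}{\log(1/\ep_k)}$, together with the elementary relation $1 + \frac{\log|J_k|}{\log(1/\ep_k)} = \frac{\log b_k}{\log(1/\ep_k)}$, i.e. $\log b_k = \log|J_k| + \log(1/\ep_k)\cdot 0$... more precisely $|J_k| = b_k\ep_k$, so $\log|J_k| = \log b_k + \log\ep_k$.

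For the upper bound on $\sup\cH$, I would argue contrapositively: if $T_{a,b}$ is $\alpha$-H\"older, then I want to show $\alpha \leq 1 - \overline{\mathrm{mdim}}_M$. The key is that inside each horseshoe $J_k$, the map $T_k^{-1}\circ f_k\circ T_k$ expands the injectivity domains of length $\ep_k$ onto intervals of length $|J_k|$, with an affine branch of slope $|J_k|/\ep_k = b_k$. H\"older continuity with exponent $\alpha$ forces $|J_k| \leq C_\alpha \ep_k^\alpha$ for all $k$ (taking two points in an injectivity domain mapped to the endpoints of $J_k$), hence $\frac{\log|J_k|}{\log(1/\ep_k)} \geq -\alpha + o(1)$, equivalently $\frac{\log b_k}{\log(1/\ep_k)} = 1 + \frac{\log|J_k|}{\log(1/\ep_k)} \leq 1 - \alpha + o(1)$, and taking $\limsup$ gives $\overline{\mathrm{mdim}}_M \leq 1 - \alpha$. (One must be slightly careful that the worst distortion in $J_k$ is indeed realized on an injectivity domain and compare with $|J_k|$ not with the larger scale, but since $f_k$ has equidistributed full branches all branches of $T_{a,b}|_{J_k}$ have the same slope $b_k$, so this is clean.) This shows $\alpha \leq 1 - \overline{\mathrm{mdim}}_M$ for every $\alpha \in \cH(T_{a,b})$, hence $\sup\cH(T_{a,b}) \leq 1 - \overline{\mathrm{mdim}}_M$.

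For the reverse inequality, I need to show that for every $\alpha < 1 - \overline{\mathrm{mdim}}_M$ the map $T_{a,b}$ is $\alpha$-H\"older, which will give $\sup\cH(T_{a,b}) \geq 1 - \overline{\mathrm{mdim}}_M$. Fix such an $\alpha$; then $\frac{\log b_k}{\log(1/\ep_k)} < 1 - \alpha$ for all large $k$, so $b_k \leq \ep_k^{-(1-\alpha)}$, i.e. the slope $b_k = |J_k|/\ep_k$ of each branch in $J_k$ satisfies $b_k\ep_k^{1-\alpha} \leq 1$ eventually, and after enlarging the constant to absorb finitely many initial horseshoes, $b_k \ep_k^{1-\alpha} \leq C$ for all $k$. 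I then estimate $|T_{a,b}(x) - T_{a,b}(y)|$ for arbitrary $x,y \in [0,1]$ by splitting into cases: (a) $x,y$ in the same injectivity domain of the same $J_k$ — then the estimate is $b_k|x-y| = b_k\ep_k^{1-\alpha}\,\ep_k^{-(1-\alpha)}|x-y|$; since $|x-y|\le \ep_k$ we get $\le b_k\ep_k^{1-\alpha}|x-y|^\alpha \le C|x-y|^\alpha$; (b) $x,y$ in the same $J_k$ but different injectivity domains, or in different $J_k$'s — here one uses that images stay within $J_k$ (resp. the relevant union of $J_s$) whose length is comparable to the separation of $x$ and $y$ via conditions (C1)–(C3), reducing to case (a) by the triangle inequality along the intermediate endpoints $a_j$; and (c) one point equal to $1$, handled by continuity and the fact that $a_k \to 1$. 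The monotonicity hypotheses (C2) on $(a_k - a_{k-1})_k$ and (C3) on $(b_k)_k$ are what make the bookkeeping in case (b) uniform.

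The main obstacle I anticipate is case (b) of the H\"older estimate: bounding $|T_{a,b}(x) - T_{a,b}(y)|$ when $x$ and $y$ lie in different horseshoes (or different branches), because then the ``local slope'' heuristic does not directly apply and one must track how the gaps $|J_k|$ decay relative to $|x - y| \geq$ (distance between the two horseshoes). The cleanest route is probably: if $x \in J_k$, $y \in J_m$ with $k < m$, then $|T_{a,b}(x) - T_{a,b}(y)| \leq |T_{a,b}(x) - a_{k}| + |a_k - T_{a,b}(y)|$... but actually since $T_{a,b}(J_s)\subseteq [0,1]$ rather than staying near $J_s$, one must instead use that $|x-y| \geq a_{k} - a_{k-1}$ is bounded below while $|T_{a,b}(x)-T_{a,b}(y)| \le 1$, and check $1 \le C(a_k - a_{k-1})^\alpha = C(b_k\ep_k)^\alpha$ holds because $b_k\ep_k^{1-\alpha}\le C$ forces $b_k^\alpha \ep_k^\alpha \ge b_k^\alpha\ep_k^\alpha$ — wait, this needs $\ep_k \le$ something; so one instead compares $|x - y|$ with the size of the injectivity domain containing the nearer point and reduces to case (a) applied at an endpoint $a_j$, using that consecutive points of the partition into injectivity domains are $\ep_k$-separated and $(b_k\ep_k^{1-\alpha})$ is bounded. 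I would present this reduction carefully as the technical heart of Proposition~\ref{mdiminterval2}.
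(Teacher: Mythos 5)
Your overall strategy is the same as the paper's: everything hinges on the quantity $H_k(\alpha)=|J_k|/\ep_k^{\alpha}=b_k\ep_k^{1-\alpha}$, whose uniform boundedness is equivalent (up to the $\limsup$) to $\alpha\leq 1-\overline{\mathrm{mdim}}_M$ via Proposition~\ref{mdiminterval1}. Your upper bound is fine and in fact more self-contained than the paper's (which simply invokes Theorem~\ref{holderineq} together with $\dim_A^{\alpha}([0,1])=1$): testing the H\"older condition on the endpoints of a single injectivity domain gives $|J_k|\leq C_\alpha\ep_k^{\alpha}$ and hence $\tfrac{\log b_k}{\log(1/\ep_k)}\leq 1-\alpha+o(1)$. (Note the sign slip: you want $\tfrac{\log|J_k|}{\log(1/\ep_k)}\leq-\alpha+o(1)$, not $\geq$; the conclusion you draw is the correct one.)

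The genuine gap is in the converse direction, precisely in the case you yourself flag as the technical heart: $x\in J_k$, $y\in J_{k+n}$ with $n\geq1$. Two of your intermediate assertions there are wrong. First, $|x-y|\geq a_k-a_{k-1}$ is false (take $x,y$ on either side of $a_k$), and $a_k-a_{k-1}=|J_k|\to0$ is not bounded below anyway, so the ``numerator $\leq1$, denominator bounded below'' route cannot work. Second, you hedge with ``$T_{a,b}(J_s)\subseteq[0,1]$ rather than staying near $J_s$,'' but by construction $T_{a,b}|_{J_s}=T_s^{-1}\circ f_s\circ T_s$ maps $J_s$ \emph{onto} $J_s$; this invariance is exactly what saves the estimate, since it forces $|T(x)-T(y)|\leq a_{k+n}-a_{k-1}=\sum_{i=k}^{k+n}|J_i|$. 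The paper closes the case by locating the extremal pair ($x=a_k-\ep_k$, $y=a_{k+n-1}+\ep_{k+n}$) and using concavity of $t\mapsto t^{\alpha}$ to split the quotient into three pieces bounded by $H_k(\alpha)$, $1$ (from $(\sum_{i=k+1}^{k+n-1}|J_i|)^{1-\alpha}\leq1$), and $H_{k+n}(\alpha)$; equivalently one can insert the points $a_k$ and $a_{k+n-1}$ by the triangle inequality and note that each of the three gaps is at most $|x-y|$. A separate (easy) case for $y=1$ is also needed, which your sketch only mentions in passing; continuity alone does not give a H\"older bound there, and the paper again reduces it to $H_k(\alpha)$ by concavity. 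As written, your proposal establishes the upper bound for $\sup\cH(T_{a,b})$ but does not yet prove the lower bound.
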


\begin{proof}
    Let $T=T_{a,b}$. By Theorem~\ref{holderineq} we already know that $$\sup\cH(T)\,\leq\,1\,-\,\overline{\mathrm{mdim}}_M([0,1],|\cdot|,T).$$
    
    For the converse inequality, we may assume that $\overline{\mathrm{mdim}}_M([0,1],|\cdot|,T)<1$, otherwise there is nothing else to prove. Let $\alpha\,\in\,(0,\,1-\overline{\mathrm{mdim}}_M([0,1],|\cdot|,T))$. Our aim is to bound $$\sup_{x\,\neq \,y\,\in\, [0,1]}\frac{|T(x)-T(y)|}{|x-y|^\alpha}.$$ To this end we will consider the possible different location of the points $x,y$ with respect to the decomposition provided by $(J_k)_k.$

\medskip

\noindent\textbf{Case 1:} $x,y\in J_k$ for some $k\geq1.$

\medskip

Let
\begin{equation}\label{H_kalpha}
        H_k(\alpha)\,=\, \sup_{x\,\neq\, y\,\in \,J_k}\frac{|T(x)-T(y)|}{|x-y|^\alpha} \, =\,\frac{|T(a_{k-1})-T(a_{k-1}+\ep_k)|}{\ep_k^\alpha}\,=\,\frac{|J_k|}{\ep_k^\alpha}\,=\,\frac{b_k^\alpha}{|J_k|^{\alpha-1}}.
    \end{equation}
Observe that \begin{eqnarray}\label{logH_k}
    \log H_k(\alpha)&=& \alpha \,\log b_k\,-\,(\alpha\,-\,1)\,\log |J_k| \nonumber\\
    &=& \log b_k\, \Big( (\alpha\,-\,1)\big( 1\,-\,\frac{\log|J_k|}{\log b_k}\big)\,+1\Big) \nonumber\\ 
    &=& \log b_k\, \Big( (\alpha\,-\,1)\big( \frac{\log(1/\ep_k)}{\log b_k}\big)\,+1\Big).
\end{eqnarray}
    
\noindent The maximal distortion inside $J_k$ is given precisely by $H_k(\alpha)$. By choice of $\alpha$, we know that $$\limsup_{k\,\to\,+\infty}\Big( (\alpha-1)(\frac{\log(1/\ep_k)}{\log b_k}) +1 \Big)\,<\, 0.$$ In particular, combining this information with \eqref{logH_k} we obtain that $H_k(\alpha)$ is strictly smaller than $1$ for large enough $k$. Hence, $(H_k(\alpha))_k$ is uniformly bounded and we obtain \begin{equation}\label{case1}
    \sup_{k\,\geq\,1}\, \sup_{x\,\neq\, y\,\in\, J_k} \frac{|T(x)-T(y)|}{|x-y|^\alpha}\,=\,\sup_{k\,\geq\,1} H_k(\alpha)\,<\,+\infty.
\end{equation}

\medskip

\noindent\textbf{Case 2:} $x\in J_k$ for some $k\geq1$ and $y=1$.

\medskip

The maximal distortion between $1$ and some element in $x\in J_k$ is attained at $x= a_k-\ep_k$, namely $$\sup_{x\,\in\, J_k}\frac{|1-T(x)|}{|1-x|^\alpha}\,=\,\frac{1-a_{k-1}}{(1-a_k+\ep_k)^\alpha}.$$ By concavity of the map $t\mapsto t^\alpha$, we have \begin{align*}
    \frac{1-a_{k-1}}{(1-a_k+\ep_k)^\alpha} \,& = \,\frac{(1-a_{k})\,+\,|J_k|}{(1-a_k+\ep_k)^\alpha} \\ &\leq\, 2^{1-\alpha}\Big(\frac{(1-a_{k})\,+\,|J_k|}{(1-a_k)^\alpha +\ep_k^\alpha}\Big) \\ &\leq\, 2^{1-\alpha}\Big( (1-a_k)^{1-\alpha} + \frac{|J_k|}{\ep_k^\alpha} \Big) \\ &\leq\, 2^{1-\alpha}\Big( 1 + H_k(\alpha) \Big).
\end{align*}
Thus, \begin{equation}\label{case2}
    \sup_{k\,\geq\,1}\,\sup_{x\,\in\, J_k}\frac{|1-T(x)|}{|1-x|^\alpha}\,<\,+\infty.
\end{equation}

\medskip

\noindent\textbf{Case 3:} $x\in J_k$ and $y\in J_{k+n}$ for some $k,n\geq1$.

\medskip

The maximal distortion between $x\in J_k$ and $y\in J_{k+n}$ is attained at $x= a_k-\ep_k$ and $y=a_{k+n-1}+\ep_{k+n}$, namely $$\sup_{x\,\in\, J_k,\,y\,\in\, J_{k+n}}\frac{|T(y)-T(x)|}{|y-x|^\alpha}\,=\,\frac{a_{k+n}-a_{k-1}}{(a_{k+n-1}+\ep_{k+n} -a_k +\ep_k)^\alpha}\,=\, 
\frac{\sum_{i=k}^{k+n}|J_i|}{(\ep_k + \sum_{i=k+1}^{k+n-1}|J_i| + \ep_{k+n})^\alpha},$$ where the denominator is $(\ep_k\,+\,\ep_{k+n})^\alpha$ if $n=1.$ Again by concavity, we obtain
\begin{align*}
\frac{\sum_{i=k}^{k+n}|J_i|}{(\ep_k + \sum_{i=k+1}^{k+n-1}|J_i| + \ep_{k+n})^\alpha}\, & \leq\, 3^{1-\alpha}\Big(
\frac{|J_k|\,+\,\sum_{i=k+1}^{k+n-1}|J_i| \,+\,|J_{k+n}|}{\ep_k^\alpha + (\sum_{i=k+1}^{k+n-1}|J_i|)^\alpha + \ep_{k+n}^\alpha} \Big) \\ &\leq\,  3^{1-\alpha}\Big(
\frac{|J_k|}{\ep_k^\alpha}\,+\,(\sum_{i=k+1}^{k+n-1}|J_i|)^{1-\alpha} \,+\,\frac{|J_{k+n}|}{\ep_{k+n}^\alpha} \Big) \\ &\leq\,  3^{1-\alpha}\Big(
H_k(\alpha)\,+\,1 \,+\,H_{k+n}(\alpha) \Big).
\end{align*}
Hence, \begin{equation}\label{case3}
   \sup_{k,n\,\geq\,1}\, \sup_{x\,\in\, J_k,\,y\,\in\, J_{k+n}}\frac{|T(y)-T(x)|}{|y-x|^\alpha}\,<\,+\infty.
\end{equation}

\medskip

Bringing \eqref{case1}, \eqref{case2} and \eqref{case3} together we conclude that $$ \sup_{x\,\neq\, y\,\in\,[0,1]}\frac{|T(y)-T(x)|}{|y-x|^\alpha}\,<\,+\infty,$$ which means that $T$ is $\alpha-$H\"older as claimed.

\end{proof}

\section{Open questions}

In \cite{FHT} the authors showed that, given a compact manifold $X$ of dimension greater than one, the closure of the space of bi-Lipschitz homeomorphisms of $X$ with respect to the H\"older topology contains a residual subset of homeomorphisms with infinite topological entropy. Theorem~\ref{holderineq} suggests that we ask:

\begin{question}
    Given a compact smooth manifold $(X,d)$ with dimension $\dim X\,\geq\,2$ and  $\alpha\in(0,1)$, let $\cL_\alpha$ be the closure of the space of bi-Lipschitz homeomorphisms of $X$ with respect to the $\alpha-$H\"older norm $$||T||_\alpha\,=\,||T||_\infty\,+\,\sup_{x\neq y\,\in\,X}\frac{d(T(x),T(y))}{d(x,y)^\alpha}.$$
    Is it true that a generic map $T\in\cL_\alpha$ satisfies $$\overline{\mathrm{mdim}}_M(X,d,T)\,=\,(1-\alpha)\,\dim X?$$  
\end{question}

In Theorem~\ref{mdiminterval} we express the metric mean dimension of a family of interval maps in terms of the data of a sequence of horseshoes in a manner similar to Misiurewicz formula for the entropy. The main difference is the relevance of the scales at which the dynamics of each horseshoe can be detected. This is not surprising since the metric mean dimension, contrary to the entropy, is not purely topological, depending also on the geometry of the underlying space.

\begin{question}
    Given a continuous interval endomorphism with positive metric mean dimension, can we find a sequence of horseshoes for iterates of the map and a sequence of scales upon which we may build a formula for the metric mean dimension similar to the one of Misiurewicz?
\end{question}

For finite entropy maps acting on Ahlfors regular spaces, the computation of the entropy provided by Theorem~\ref{mdimsubshift} brings forth a constant $\la$, whose nature is worthwhile exploring.

\begin{question}
    Is there a dynamical or geometrical interpretation of this constant?
\end{question}

\section{Acknowledgments}
The authors are grateful to the anonymous referee for the valuable comments and to Paulo Varandas for the insightful discussions during the preparation of this paper. MC and GP were partially supported by CMUP, member of LASI, which is financed by national funds through FCT - Funda\c c\~ao para a Ci\^encia e a Tecnologia, I.P., under the projects with references UIDB/00144/2020 and UIDP/00144/2020. MC also acknowledges financial support from the project PTDC/MAT-PUR/4048/2021. GP has been awarded a PhD grant by FCT, with reference UI/BD/152212/2021.

\end{document}